\author{Paul \textsc{Poncet}}
\address{CMAP, \'{E}cole Polytechnique, Route de Saclay, 91128 Palaiseau Cedex, France \\
and INRIA, Saclay--\^{I}le-de-France}
\email{poncet@cmap.polytechnique.fr}
\def\twoheaduparrow{\rlap{$\uparrow$}\raise.5ex\hbox{$\uparrow$}}
\newcommand{\cis}{\Bbbk} 
\newcommand{\absc}{\dashv}
\newcommand{\sci}{\overline{\mathbb{R}}_+}
\newcommand{\scif}{\mathbb{R}_+}
\newcommand{\scit}{\mathbb{R}^{\max}_+}
\newcommand{\ddint}{\int^{\scriptscriptstyle\infty}\!\!}
\newcommand{\dint}[1]{\int^{\scriptscriptstyle\infty}_{#1}\!\!}
\definecolor{myred}{rgb}{0.75,0,0}
\definecolor{myblue}{rgb}{0,0.44,0.75}
\definecolor{mygreen}{rgb}{0,0.69,0.31}
\numberwithin{equation}{section}
\newtheorem{theorem}{Theorem}[section]
\newtheorem{lemma}[theorem]{Lemma}
\newtheorem{proposition}[theorem]{Proposition}
\newtheorem{corollary}[theorem]{Corollary}
\theoremstyle{definition}
\newtheorem{definition}[theorem]{Definition}
\newtheorem{notations}[theorem]{Notations}
\newtheorem{remark}[theorem]{Remark}
\newtheorem{example}[theorem]{Example}
\newtheorem{translation}[theorem]{Translation}
\newcommand{\Set}{\underline{\mathsf{Set}}}
\newcommand{\Po}{\underline{\mathsf{Po}}}
\newcommand{\F}{\mathsf{F}}
\newcommand{\Filters}{\textcolor{myblue}{\mathsf{Fi}}} 
\newcommand{\PFilters}{\textcolor{mygreen}{\mathsf{PFi}}} 
\newcommand{\UpperStar}{\textcolor{myred}{\mathsf{Up}^*}} 
\newcommand{\Z}{\mathsf{Z}}
\begin{document}

\title{What is the role of continuity \\ in continuous linear forms representation?}

\date{\today}

\subjclass[2010]{06A12, 
                 06A15, 
                 06F99, 
                 12K10, 
                 18A99, 
                 28B15} 

\keywords{domains, continuous lattices, $\Z$-theory, max-plus algebra, idempotent analysis, idempotent semifields, idempotent semimodules, Dedekind--MacNeille completion, residuation, maxitive measures, Radon--Nikodym theorem, Riesz representation theorem}

\begin{abstract}
The recent extensions of domain theory have proved particularly efficient to study lattice-valued maxitive measures, when the target lattice is continuous. Maxitive measures are defined analogously to classical measures with the supremum operation in place of the addition. 
Building further on the links between domain theory and idempotent analysis highlighted by Lawson (2004), we investigate the concept of domain-valued 
\textit{linear forms} on an idempotent (semi)module. 
In addition to proving representation theorems for continuous linear forms, we address two applications: the idempotent Radon--Nikodym theorem and the idempotent Riesz representation theorem. 
To unify similar results from different mathematical areas, 
our analysis is carried out in the general $\Z$ framework of domain theory. 
\end{abstract}

\maketitle

\section{Introduction}

Maxitive measures 
are defined analogously to classical (additive) measures with the supremum operation $\oplus$ in place of the addition $+$.  
These measures were first introduced by Shilkret \cite{Shilkret71}, and rediscovered many times. This explains why similar notions and results coexist in the literature, that we tried to survey, unify, and surpass in \cite[Chapter~I]{Poncet11}. 



Maslov's monograph \cite{Maslov87}, in which maxitive measures with values in ordered semirings were considered, testifies to deep 
connections between idempotent analysis and \textit{order theory} or \textit{lattice theory}. 
Similar initiatives have been undertaken in the framework of fuzzy set theory, 
where $[0,1]$-valued possibility measures have been replaced by lattice-valued possibility measures (see Greco \cite{Greco87}, Liu and Zhang \cite{Liu94}, de Cooman et al.\ \cite{deCooman01}, Kramosil \cite{Kramosil05a}). 
More recently, the branch of order theory dealing with \textit{continuous lattices} and \textit{domains} turned out to play a crucial role in the study of lattice-valued maxitive measures; see the work of Heckmann and Huth \cite{Heckmann98b, Heckmann98}, treating fuzzy set theory, category theory and continuous lattices, and of Akian \cite{Akian99}, who favoured applications to idempotent analysis and large deviations of random processes. Connections between idempotent mathematics and continuous lattices (or domain theory) also arose in the work of Akian and Singer \cite{Akian03}, and were surveyed by Lawson \cite{Lawson04b}. See also the early developments of Norberg \cite{Norberg89, Norberg97} on domain-valued random variables and the use of continuous (semi)lattices in random set theory. 

The article \cite{Poncet12b} was another contribution to the strengthening of these links; we considered maxitive measures with values in a domain rather than in $\sci$. In the present paper we shall build further on the role of domain theory in idempotent analysis. 
We shall be especially interested in linear forms on a module over an idempotent semifield $\cis$. 

Our motivation partly comes from the following apparent paradox. Let $\nu$ be a completely maxitive measure defined on the open subsets $\mathrsfs{G}(E)$ of a topological space $E$, and taking its values in a complete lattice $\cis$. It is known since Heckmann and Huth \cite{Heckmann98b, Heckmann98} and Akian \cite{Akian99} that, if $\cis$ is a continuous lattice (hence a domain), then $\nu$ admits a cardinal density, i.e.\ is of the form 
\begin{equation}\label{eq:cd}
\nu(\cdot) = \bigoplus_{x \in \cdot} c^{+}(x), 
\end{equation}
for some map $c^{+} : E \rightarrow \cis$. See also \cite[Corollary~II-5.9]{Poncet11}. But Heckmann and Huth proved a stronger result, for they \textit{characterized} continuity of $\cis$ as follows: if $\cis$ is a given complete lattice, then it is continuous \textit{if and only if}, for every topological space $E$, each completely maxitive measure $\nu : \mathrsfs{G}(E) \rightarrow \cis$ admits a cardinal density \cite[Theorem~5]{Heckmann98}. 

Surprisingly, the work of Litvinov et al.\ \cite{Litvinov01} and Cohen et al.\ \cite{Cohen04} seems to contradict this result. 
Indeed, these authors proved a representation theorem for \textit{continuous} linear forms $v$ defined on a complete $\cis$-module $M$, with $\cis$ a complete idempotent semifield: one can write 
\begin{equation}\label{eq:cd2}
v(\cdot) = \langle c, \cdot \rangle, 
\end{equation}
for some $c \in M$, where $\langle c, x \rangle$ denotes a $\cis$-valued operation defined on a subset of $M \times M$. This representation has formal and theoretical affinities with that of Equation~(\ref{eq:cd}). However, its terms require no kind of continuity assumption on $\cis$! Coherently no reference to domain theory appears in the last-mentioned papers. How can one understand this paradox?



To unravel it, we need to go beyond the tools of classical domain theory, and use instead the general $\Z$ framework of domain theory (see  Bandelt and Ern\'e  \cite{Bandelt83}). 
This is about selecting other subsets than the usual filtered subsets, e.g.\ singletons or nonempty subsets. This is done by a functor $\Z : \Po \rightarrow \Set$ from the category of posets to the category of sets. Then one can redefine the notions of way-above relation and continuous poset. In the case where $\Z$ selects nonempty subsets, a continuous poset is nothing but a \textit{completely distributive} poset or \textit{supercontinuous} poset in the sense of Ern\'e et al.\ \cite{Erne06}. And if $\Z$ selects singletons, it happens that the way-above relation coincides with the partial order $\geqslant$ and that every poset is continuous! This functor is implicitly used in \cite{Litvinov01} and \cite{Cohen04}, and this explains why these articles apparently do not ask for continuity of $\cis$. 

We warn the reader that the following notions will depend on a given functor $\Z$: 
\begin{itemize}
	\item way-above relation, 
	\item continuous idempotent semifield, 
	\item smooth linear map (or form), 
	\item continuous linear map (or form), 
	\item completable and complete modules, 
	\item cuts and normal completion of a completable module, 
	\item strongly archimedean element of a module.
\end{itemize}
In works related to $\Z$-theory, it is common practice to constantly recall the dependency on $\Z$ ($\Z$-complete poset, $\Z$-way-above relation, $\Z$-continuous poset, etc.); we believe however that it makes the text heavy and is not really useful if the context is clear. 

A linear form $v : M \rightarrow \cis$ is \textit{smooth} if $v$ commutes with infima of $\Z$-sets, and \textit{continuous} if $v$ is smooth and commutes with arbitrary existing suprema. Under appropriate hypotheses, smooth linear forms can be represented by an ideal of the module $M$; for continuous linear forms, this ideal becomes principal, i.e.\ is generated by an element $c$, and one obtains Equation~(\ref{eq:cd2}) as stated by the following theorem. 

\begin{theorem}\label{thm:litv1} 
Suppose that $M$ is a complete module over a continuous complete idempotent semifield  $\cis \neq \{ 0, 1\}$, 
and let $v : M \rightarrow \cis$. 
Then $v$ is a non-degenerate continuous linear form on $M$ if and only if there is an strongly archimedean element $c \in M$ such that $v(\cdot) = \langle c, \cdot \rangle$. 
In this case, $c$ is unique and equals the supremum of the set $\{ 1 \geqslant v \}$. 
\end{theorem}

This result generalizes \cite[Theorems~5.1 and 5.2]{Litvinov01} and \cite[Corollary~39]{Cohen04}. The implicit functor $\Z$ is supposed to be \textit{union-complete}, so that in every continuous poset the way-above relation is interpolating, i.e.\ such that $t \gg r$ implies $t \gg s \gg r$ for some $s$. 

Using Theorem~\ref{thm:litv1} we reprove the idempotent Radon--Nikodym theorem (or Sugeno--Murofushi theorem, see \cite{Sugeno87} and \cite[Chapter~I]{Poncet11}). For this purpose we choose for $\Z$ the functor that selects singletons. 
If $\tau$ (resp.\ $\nu$) denotes the dominating (resp.\ dominated) $\sigma$-maxitive measure, the module we work with is not $L^1_+(\tau)$ but a module $\mathbf{M}$ that depends on both $\tau$ and $\nu$. We show that $\tau$ is localizable (resp.\ $\sigma$-principal) if and only if $\mathbf{M}$ is a complete module (resp.\ a $\sigma$-principal module). Moreover, if $\tau$ is $\sigma$-principal, then every $\sigma$-continuous linear form on $\mathbf{M}$ is continuous. With this result, the idempotent Radon--Nikodym theorem can be deduced easily. 

Unfortunately, Theorem~\ref{thm:litv1} is not sufficient for proving an idempotent version of the Riesz representation theorem. 
The idempotent Riesz theorem usually applies to a linear form $V : M \rightarrow \mathbb{R}_+$ defined on the module $M$ of nonnegative bounded continuous maps of a Tychonoff space. It asserts that $V$ can be expressed as a Shilkret integral with respect to some regular maxitive measure that is finite on compact subsets. Since such a measure always admits a finite cardinal density $c^{+}$, this amounts to writing $V$ as 
$$
V(f) = \bigoplus_{x \in E} \frac{f(x)}{c(x)}, 
$$
for some map $c : E \rightarrow \mathbb{R}_+^*$ (and in fact $c = 1/c^{+}$), where $E$ is the underlying topological space. But $c$ does not need to be continuous, it is only lower-semi\-continuous in general. This means that $c$ is \textit{outside} $M$, a case that is not treated by Theorem~\ref{thm:litv1}. 

To take account of this situation, we introduce \textit{module extensions}, i.e.\ pairs $\overline{M}/M$ with $M$ a submodule of a complete module $\overline{M}$. For instance the module of nonnegative lower-semi\-continuous maps is an extension of the module of nonnegative bounded continuous maps. We obtain the following result. 

\begin{theorem}
Suppose that $\overline{M}/M$ is a extension over a complete idempotent semifield $\cis$, and let $v : M \rightarrow \cis$ be a linear form on $M$. Assume that the extension is meet-continuous. Then $v$ is non-degenerate continuous on $\overline{M}/M$ if and only if there is an archimedean element $c$ in $\overline{M}/M$ such that $v(\cdot) = \langle c, \cdot \rangle$. 
In this case, the supremum of $\{ 1 \geqslant v \}$ in $\overline{M}$ is the least $c$ satisfying $v(\cdot) = \langle c, \cdot \rangle$. 
\end{theorem}

For simplification purposes this theorem is limited to the case where $\Z$ selects singletons. 
A novel assumption is introduced: we ask for the extension $\overline{M}/M$ to be \textit{meet-continuous}. This specifies that finite infima distribute over directed suprema. 
This result enables one to tackle the idempotent Riesz theorem. We reprove, with a few improvements, a version of this theorem given by Choquet \cite{Choquet54} and proved by Kolokoltsov and Maslov \cite{Kolokoltsov87} in the locally-compact case, and a version due to Breyer and Gulinsky \cite{Breyer96} and also reproved by Puhalskii \cite{Puhalskii01}. We also prove a Riesz like theorem in the case where the topological space $E$ is separable metrizable. 

The paper is organized as follows. 
Section~\ref{secpre2} recalls basics of domains and continuous posets, in the 
categorical framework of $\Z$-theory. 
Section~\ref{sec:sm} deals with the concepts of idempotent semifields and modules over semirings. 
In Section~\ref{sec:mf} we introduce the notion of linear forms defined on a $\cis$-module, where $\cis$ is an idempotent semifield. We propose a generic way of constructing such maps using ideals of the underlying module. When continuity assumptions on $\cis$ are required, we use the tools of $\Z$-theory introduced in Section~\ref{secpre2}. 
In Section~\ref{sec:continuous} our main theorem on representation of continuous linear forms on a complete module is proved. 
In Section~\ref{sec:rn} we go through some applications to maxitive measures and the idempotent Radon--Nikodym theorem. 
Section~\ref{sec:dmcc} provides necessary and sufficient conditions for a module to be embeddable into a complete module. 
In Section~\ref{sec:rmf} we prove a representation theorem for residuated forms on a module extension. 
In Section~\ref{sec:riesz} the idempotent Riesz representation theorem is proved.  


\section{A primer on $\Z$-theory for continuous posets and domains}\label{secpre2}

A \textit{poset} or \textit{partially ordered set} $(P,\leqslant)$ is a set $P$ equipped with a reflexive, antisymmetric and transitive binary relation $\leqslant$. 
Let us denote by $\Po$ the category of all posets with order-preserving maps as morphisms. A \textit{subset selection} is a function that assigns to each poset $P$ a certain collection $\Z[P]$ of subsets of $P$ called the \textit{$\Z$-sets} of $P$. A \textit{subset system} is a subset selection $\Z$ such that 
\begin{enumerate}
	\item[$i)$] at least one $\Z[P]$ has a nonempty element, 
	\item[$ii)$] for each order-preserving map $f : P \rightarrow Q$, $f(Z) \in \Z[Q]$ for every $Z \in \Z[P]$,
\end{enumerate}
the point $ii)$ meaning that $\Z$ is a covariant functor from $\Po$ to $\Set$ (the category of sets) with $\Z[f]$ defined by $\Z[f](Z) = f(Z)$ if $Z \in \Z[P]$, for every order-preserving map $f : P \rightarrow Q$. 
To this definition, first given by Wright et al.\ \cite{Wright78}, we add a third (unusual but useful in the framework of this paper) condition: 
\begin{enumerate}
	\item[$iii)$] the empty set is not in $\Z[P]$, for all posets $P$. 
\end{enumerate}
The suggestion of \cite{Wright78} to apply subset systems to the theory of continuous posets was followed by Nelson \cite{Nelson81}, Novak \cite{Novak82b}, Bandelt \cite{Bandelt82}, Bandelt and Ern\'e \cite{Bandelt83}, \cite{Bandelt84}, and this research was carried on by Venugopalan \cite{Venugopalan86}, \cite{Venugopalan88}, Xu \cite{Xu95}, Baranga \cite{Baranga96}, Menon \cite{Menon96}, Shi and Wang \cite{Shi96}, Ern\'e \cite{Erne99}, \cite{Erne01} among others. 
Conditions $i)$ and $ii)$ together ensure that each $\Z[P]$ contains all singletons. 

The basic example of subset system is the set of directed subsets of $P$. This subset system is behind the classical theory of continuous posets and domains, see the monograph by Gierz et al.\ \cite{Gierz03}. 	
Here are some further examples:
\begin{enumerate}
	\item \textcolor{myred}{Taking $\Z[P]$ as the set of all nonempty subsets of $P$ works well for investigating completely distributive lattices, see Ern\'e et al.\  \cite{Erne06}. Completely distributive lattices were initially examined by Raney \cite{Raney52}, \cite{Raney53}.} 
	\item \textcolor{myblue}{The case where $\Z[P]$ is the set of filtered subsets of $P$ was used for instance by G.\ Gerritse \cite{Gerritse97}, Jonasson \cite{Jonasson98}, Akian and Singer \cite{Akian03}. See also \cite{Poncet12b}. } 
	\item \textcolor{mygreen}{If $\Z[P]$ is the set of all singletons of $P$, then $\Z$ is also a subset selection. } 
	\item A series of papers deals with the case where $\Z[P]$ is the set of chains of $P$, see Markowsky and Rosen \cite{Markowsky76b}, and Markowsky \cite{Markowsky77}, \cite{Markowsky81a}, \cite{Markowsky81b}. Using the Hausdorff maximality theorem, relations between directed subsets and chains 
	were explored by Iwamura \cite{Iwamura44}, Bruns \cite{Bruns67}, and Markowsky \cite{Markowsky76}. See also Ern\'e \cite[p.\ 54]{Erne99}. 
	\item The case where $\Z[P]$ is the set of nonempty finite subsets of $P$ was investigated by Martinez \cite{Martinez72}. 
	See also Frink \cite{Frink54} and Ern\'e \cite{Erne81}. 
\end{enumerate}

Rather than $\Z$, we shall often deal with the subset selection $\F$, 
defined by $\F[P] = \{ \uparrow\!\! Z : Z \in \Z[P] \}$, where $\uparrow\!\! Z$ is the upper subset generated by $Z$, i.e.\ $\uparrow\!\! Z := \{ y \in P : \exists x\in Z, x\leqslant y \}$. The elements of $\F[P]$ are the \textit{$\F$-sets}, or the ($\Z$-)\textit{filters}, of $P$. Although $\F$ is not a subset system in general, it satisfies the following conditions:
\begin{enumerate}
	\item[$i)$] at least one $\F[P]$ has a nonempty element, 
	\item[$ii')$] for each order-preserving map $f : P \rightarrow Q$, $\uparrow\!\! f(F) \in \F[Q]$ for every $F \in \F[P]$, 
	\item[$iii)$] an $\F$-set is never empty. 
\end{enumerate}
A subset selection $\F$ derived from a subset system $\Z$ as above 
will be called a \textit{filter selection}. Note that, like $\Z$, $\F$ is functorial, i.e.\ $\F[g \circ f] = \F[g] \circ \F[f]$ for all order-preserving maps $f : P \rightarrow Q$ and $g : Q \rightarrow R$, if one naturally defines $\F[f](F) = \uparrow\!\! f(F)$ for all $F \in \F[P]$. 

\begin{translation}[Filter selections]
The first three examples of subset systems given above lead to the following filter selections, respectively:
\begin{enumerate}
	\item \textcolor{myred}{$\F[P]$ is the set $\UpperStar[P]$ of \textit{nonempty upper subsets} of $P$, } 
	\item \textcolor{myblue}{$\F[P]$ is the set $\Filters[P]$ of \textit{filters} (in the sense of \cite{Gierz03}) of $P$, } 
	\item \textcolor{mygreen}{$\F[P]$ is the set $\PFilters[P]$ of \textit{principal filters} of $P$. } 
\end{enumerate}
\end{translation}


We now introduce the \textit{way-above relation}, which in our context is more relevant than the usual \textit{way-below relation}. Thus, our notions of continuous posets and domains are dual to the traditional definitions. The way-above relation has already been used to study lattice-valued upper-semi\-continuous functions, see for instance \cite{Gerritse97} and \cite{Jonasson98}; see also \cite{Poncet12b}. 
We say that $y \in P$ is \textit{way-above} $x\in P$, written $y \gg x$, if, for every $\F$-set $F$ with infimum, $x \geqslant \bigwedge F$ implies $y \in F$. 
We use the notations $\twoheaduparrow x = \{ y \in P : y \gg x \}$, and for $A \subset P$, $\twoheaduparrow A = \{ y \in P : \exists x \in A, y \gg x \}$. 
The poset $P$ is \textit{continuous} if every element is the $\F$ infimum of elements way-above it, i.e.\ $\twoheaduparrow x \in \F[P]$ and $x = \bigwedge \twoheaduparrow x$ for all $x \in P$. A \textit{domain} is a continuous poset in which every $\F$-set has an infimum. 

\begin{translation}[Continuous posets]
For our three examples of subset systems, the notion of continuous posets translates respectively as follows:
\begin{enumerate}
	\item \textcolor{myred}{if $\F = \UpperStar$, then a poset is continuous if and only if it is completely distributive (complete distributivity is sometimes called \textit{supercontinuity}), } 
	\item \textcolor{myblue}{if $\F = \Filters$, then a poset is continuous if and only if it is continuous in the sense of \cite{Poncet12b},} 
	\item \textcolor{mygreen}{if $\F = \PFilters$, then the way-above relation $y \gg x$ reduces to the partial order $y \geqslant x$, and every poset is continuous. } 
\end{enumerate}
\end{translation}

For a poset $P$, the way-above relation is \textit{additive} if, for all $x \in P$, the subset $\{ y \in P : x \gg y\}$ is either empty or directed, i.e.\ if whenever $x \gg y$ and $x \gg y'$, we have $x \gg z$ for some $z \in P$ such that $z \geqslant y$ and $z \geqslant y'$. A continuous poset with an additive way-above relation is \textit{stably-continuous}. 
\textcolor{mygreen}{With respect to the filter selection $\PFilters$, every poset is stably-continuous. }

A poset $P$ has the \textit{interpolation property} if, for all $x, y \in P$ with $y \gg x$, there exists some $z \in P$ such that $y \gg z  \gg x$. 
For continuous posets in the classical sense, it is well known that the interpolation property holds, see e.g.\ \cite[Theorem~I-1.9]{Gierz03}. This is a crucial feature that is behind many important results of the theory. 
For an arbitrary choice of $\Z$, however, this needs no longer to be true. 
Deriving sufficient conditions on $\Z$ to recover the interpolation property is the goal of the following theorem. 
The subset selection $\F$ is \textit{union-complete} if, for every $V \in \F[\F[P]]$ (where $\F[P]$ is considered as a poset ordered by reverse inclusion $\supset$), $\bigcup V \in \F[P]$. As explained in \cite{Erne99}, this condition embodies the fact that finite unions of finite sets are finite, $\supset$-filtered unions of filtered sets are filtered, etc. 
The following theorem restates a result due to \cite{Novak82b} and \cite{Bandelt83} in its dual form. We give the proof here for the sake of completeness. 

\begin{theorem}\cite{Novak82b, Bandelt83}
If $\F$ is a union-complete filter selection, then every continuous poset has the interpolation property. 
\end{theorem}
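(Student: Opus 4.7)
The plan is to dualise the standard Scott interpolation argument. Given $y \ssup x$ in the continuous poset $P$, I would introduce
\[ W := \bigcup\{\twoheaduparrow w : w \ssup x\} \]
and aim to show that $W \in \F[P]$ with $\bigwedge W = x$. Once these two facts are in hand, the defining property of $y \ssup x$, applied to the $\F$-set $W$, forces $y \in W$, producing some $z$ with $y \ssup z \ssup x$, as required.

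The crucial step is $W \in \F[P]$, and this is precisely where union-completeness enters. Continuity of $P$ makes the assignment $\phi : P \to \F[P]$, $w \mapsto \twoheaduparrow w$, well-defined. Equipping $\F[P]$ with reverse inclusion $\supset$, I would check that $\phi$ is order-preserving: if $w_1 \leqslant w_2$ and $y' \ssup w_2$, then for any $\F$-set $F$ with infimum satisfying $w_1 \geqslant \bigwedge F$, one also has $w_2 \geqslant \bigwedge F$, whence $y' \in F$, so $y' \ssup w_1$; this shows $\twoheaduparrow w_2 \subset \twoheaduparrow w_1$, which in the $\supset$-order reads $\phi(w_1) \leqslant \phi(w_2)$. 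Applying property $ii')$ to $\phi$ and to $\twoheaduparrow x \in \F[P]$ yields $V := \uparrow\!\! \phi(\twoheaduparrow x) \in \F[\F[P]]$, and union-completeness gives $\bigcup V \in \F[P]$. A double inclusion then identifies $\bigcup V$ with $W$: each member of $V$ is $\subset$-contained in some $\twoheaduparrow w$ with $w \ssup x$, while each such $\twoheaduparrow w$ itself lies in $V$.

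The computation of $\bigwedge W$ is routine. Since singletons are $\Z$-sets, $\uparrow\!\! v \in \F[P]$ and $\bigwedge \uparrow\!\! v = v$ for every $v \in P$, so $y' \ssup v$ implies $y' \geqslant v$. In particular, every element of $W$ dominates some $w \ssup x$, which in turn dominates $x$, so $x$ is a lower bound of $W$. Conversely, any lower bound $u$ of $W$ is, for each $w \ssup x$, a lower bound of $\twoheaduparrow w \subset W$, hence $u \leqslant \bigwedge \twoheaduparrow w = w$ by continuity; letting $w$ range over $\twoheaduparrow x$ gives $u \leqslant \bigwedge \twoheaduparrow x = x$. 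Thus $\bigwedge W = x$, and the definition of $y \ssup x$ delivers $y \in W$, concluding the argument.

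The main obstacle is the correct invocation of union-completeness: the naive family $\{\twoheaduparrow w : w \ssup x\}$ indexed by $\twoheaduparrow x$ is not by itself a member of $\F[\F[P]]$, and one must pass through the functorial property $ii')$ applied to $\phi$ to land within its scope. Everything after this functorial bridge is a faithful rewriting of the classical Scott proof in the $\F$-language.
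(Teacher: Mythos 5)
Your proof is correct and follows essentially the same route as the paper's: both hinge on showing that $\twoheaduparrow(\twoheaduparrow x)$ is an $\F$-set by applying property $ii')$ to the order-preserving map $w \mapsto \twoheaduparrow w$ into $(\F[P],\supset)$ and then invoking union-completeness, followed by verifying that its infimum is $x$ so that the definition of $y \ssup x$ forces $y$ into it. Your lower-bound computation of $\bigwedge W$ is just a slightly more explicit version of the paper's chain of equalities.
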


\begin{remark}
In the context of $\Z$-theory, many authors (see \cite{Novak82b}, \cite{Bandelt83}, \cite{Venugopalan86}) call \textit{strongly continuous} a continuous poset with the interpolation property. 
\end{remark}

\begin{proof}
Let $P$ be a continuous poset, and let $x \in P$. We need to show that $F \subset \twoheaduparrow F$, where $F$ denotes the $\F$-set $F = \twoheaduparrow x$. For this purpose we first prove that $\twoheaduparrow F$ is an $\F$-set. Write $\twoheaduparrow F = \bigcup_{y \in F} \twoheaduparrow y = \bigcup V$, where $V$ is the collection of subsets contained in some $\twoheaduparrow y$, $y \in F$. Considering the order-preserving map $f : P \ni y \mapsto \twoheaduparrow y \in \F[P]$ (recall that $\F[P]$ is ordered by reverse inclusion) and using Property~$ii')$ above, we have $V  = \uparrow\!\!f(F) \in \F[\F[P]]$. Since $\F$ is union-complete, one has $\twoheaduparrow F = \bigcup V \in \F[P]$. 
Since $P$ is continuous, 
$$
x = \bigwedge \twoheaduparrow x = \bigwedge F = \bigwedge_{y \in F} y = \bigwedge_{y \in F} (\bigwedge \twoheaduparrow y) = \bigwedge (\bigcup_{y \in F} \twoheaduparrow y) = \bigwedge \twoheaduparrow F. 
$$ 
The definition of the way-above relation and the fact that $\twoheaduparrow F \in \F[P]$ give $y \in \twoheaduparrow F = \twoheaduparrow (\twoheaduparrow x)$, for all $y \in \twoheaduparrow x$.  This proves that $P$ has the interpolation property.
\end{proof}

All subset systems mentioned above are union-complete. It remains an open problem to exhibit a continuous poset with respect to some subset system that does not satisfy the interpolation property. 

We should stress the fact that the machinery of category theory is justified as long as relations between posets are examined. If a single poset $P$ is at stake, having just a collection of subsets of $P$ at disposal could be sufficient, as in the works \cite{Bandelt82}, \cite{Bandelt84}, \cite{Xu95} (where the letter $\mathfrak{M}$ is used for the collection of selected subsets). In the present work, we hope that the relevance of using functorial (filter) selections will be made clear.

\section{Semirings, semifields, modules over a semiring}\label{sec:sm}


\subsection{Semirings, semifields}

A \textit{semiring} is an abelian monoid $(\cis, \oplus, 0)$ endowed with an additional binary relation $\times$ (the multiplication) that is associative, has a unit $1 \neq 0$, distributes over $\oplus$, and admits $0$ as absorbing element. 
A semiring is \textit{idempotent} (or is a \textit{dioid}, see Baccelli et al.\ \cite{Baccelli92} or Gondran and Minoux \cite{Gondran08}) if $\oplus$ is idempotent, i.e.\ $t \oplus t = t$ for all $t$, and \textit{commutative} if the multiplication is commutative. 
An (\textit{idempotent}) \textit{semifield} is an (idempotent) semiring in which every non-zero element has a multiplicative inverse. We do not assume a semifield to be commutative in general (see however Remark~\ref{rk:iwa}). 
Notice that, if $\cis$ is an idempotent semifield, then $\cis \setminus \{0\}$ is a \textit{lattice-group}. This implies that $\cis$ is a distributive lattice; in particular, every nonempty finite subset of $\cis$ has an infimum, and we have 
\begin{equation}\label{eq:cisinf}
s \wedge t = (s^{-1} \oplus t^{-1})^{-1}, 
\end{equation}
for all $s, t \in \cis\setminus\{0\}$. 

A dioid has a natural structure of partially ordered set with $s \leqslant t \Leftrightarrow  s \oplus t = t$, whose bottom element is $0$. With this point of view $s \oplus t$ is nothing but the supremum of $\{s, t\}$, hence every dioid is a commutative idempotent monoid, i.e.\ a \textit{semilattice}. 
Given a filter selection $\F$, a dioid is \textit{complete} 
if every upper-bounded 
subset $T$ has a supremum such that  
\begin{equation}\label{eq:infsup}
s (\bigoplus T) = \bigoplus_{t \in T} s t, \qquad (\bigoplus T) s = \bigoplus_{t \in T} t s, 
\end{equation}
for all $s$, and if every $\F$-set $F$ has an infimum such that 
\begin{equation}\label{eq:infsupf}
s (\bigwedge F) = \bigwedge_{f \in F} s f, \qquad (\bigwedge F) s = \bigwedge_{f \in F} f s, 
\end{equation}
for all $s$. 
\textcolor{mygreen}{With respect to the filter selection $\PFilters$ that selects principal ideals, Equations~(\ref{eq:infsupf}) are trivial. } 

In an idempotent semifield, Equations~(\ref{eq:infsup}) and (\ref{eq:infsupf}) are satisfied for all subsets $T$ (resp.\ $F$) with supremum (resp.\ with infimum). Thus, an idempotent semifield is complete if and only if every upper-bounded subset has a supremum. This makes the notion of complete idempotent semifield independent of the filter selection $\F$. 

The following lemma, which will be used many times in this paper, mimics a result by Akian and Singer \cite[Lemma~2.1]{Akian03}. 

\begin{lemma}[Extends \protect{\cite[Lemma~2.1]{Akian03}}]\label{lem:akiansinger}
Let $\F$ be a filter selection, and let $\cis$ be an idempotent semifield. For all $r, s, t \in \cis$ with $r \neq 0$, $t \gg s$ implies $t r \gg s r$. 
\end{lemma}

\begin{proof}
Let $F$ be an $\F$-set of $\cis$ with infimum such that $s r \geqslant \bigwedge F$. The map $f : \cis \rightarrow \cis$ defined by $f(u) = u r^{-1}$ is  order-preserving, hence the set $\uparrow\!\! f(F) = f(F)$ is an $\F$-set. Since $t \gg s \geqslant (\bigwedge F) r^{-1} = \bigwedge f(F)$, we have $t \in f(F)$, so $t r \in F$. 
This shows that $t r \gg s r$. 
\end{proof}

From now on we use the acronym \textit{cis} for a complete idempotent semifield distinct from $\{0, 1\}$. A cis is never a complete lattice; if it were, there would be a greatest element $\top$, and we would have $\top \geqslant 1 \Rightarrow \top^2 \geqslant \top \Rightarrow \top^2 = \top \Rightarrow \top = 1$, while $\top = 1$ is only possible if the cis coincides with $\{0, 1\}$ (a case that is excluded in the definition of a cis). 

\begin{remark}\label{rk:iwa}
It is worth recalling that, by the Iwasawa theorem, every cis is commutative (see e.g.\ Birkhoff \cite[Theorem~28]{Birkhoff67}). 
\end{remark}

\begin{remark}[On quasifields]
Litvinov et al.\ \cite{Litvinov01} defined a \textit{quasifield} as a dioid in which every non-zero element is the supremum of invertible elements and such that $t \leqslant 1$ whenever the subset $\{ t^n : n = 1, 2, \ldots \}$ is upper-bounded. They showed that every quasifield distinct from $\{0,1\}$ can be embedded into a cis, and asserted that, conversely, every cis is a quasifield. This latter point indeed holds, for if, for some $t \neq 0$, the subset $\{ t^n : n = 1, 2, \ldots \}$ is upper-bounded, and if $s$ is its supremum, then with Equations~(\ref{eq:infsup}) we have $t s \leqslant s$; since $s \neq 0$ we deduce that $t \leqslant 1$. 
\end{remark}

\subsection{Modules over a semiring}

We now turn our attention to modules over a semiring. 

\begin{definition}\label{def:sm}
Let $\cis$ be a semiring. 
A \textit{right $\cis$-module} 
is a commutative monoid $(M, \oplus, 0)$ equipped with a right action $M \times \cis \ni (x, t) \mapsto x.t \in M$ such that, for all $x \in M$, $x.0 = 0$, $x.1 = x$, and for all $y \in M$, $s, t \in \cis$, 
\begin{align*}
x.(s t) &= (x.s).t, \\ 
(x \oplus y).t &= x.t \oplus y.t, \\ 
x.(s \oplus t) &= x.s \oplus x.t. 
\end{align*}
A subset of $M$ is a \textit{submodule} if it contains $0$ and is closed under addition and external multiplication.  
\end{definition}
In the sequel we shall say \textit{$\cis$-module} or \textit{module over $\cis$} for \textit{right $\cis$-module}, and we shall only deal with modules over an idempotent semifield $\cis$. 
Then the previous axioms imply that $x \oplus x = x$ for all $x \in M$, and $0.t = 0$ for all $t \in \cis$, so the addition $x \oplus y$ of two elements $x, y$ of $M$ is the supremum of $\{x, y\}$ with respect to the induced partial order $x \leqslant y \Leftrightarrow x \oplus y = y$. In other words, $(M, \oplus, 0)$ is a semilattice. 


For background on -or applications of- modules over dioids or quantales, see 
Zimmermann \cite{Zimmermann77}, Samborski{\u\i} and Shpiz \cite{Samborskii92}, Abramsky and Vickers \cite{Abramsky93}, Rosenthal \cite{Rosenthal94}, \cite{Huth97}, Kruml \cite{Kruml02}, Cohen et al.\ \cite{Cohen04}, Litvinov et al.\ \cite{Litvinov01}, Shpiz \cite{Shpiz07}, Shpiz and Litvinov \cite{Shpiz07}, Gondran and Minoux \cite{Gondran08}, Russo \cite{Russo10}, Castella \cite{Castella08}. 

\begin{remark}
Some authors, especially in the area of idempotent analysis and max-plus algebra, prefer to call \textit{semimodule} a module over a semiring, and \textit{idempotent semimodule} a module over a dioid or over an idempotent semifield. 
However, one can see that the axioms given in Definition~\ref{def:sm} do not differ from the axioms defining a classical module (over a ring), and distinctions only appear in the choice of the base semiring. 
The same remark can be made for axioms defining a \textit{morphism} between modules (see Section~\ref{sec:mf} for the precise definition). Hence, from a categorical (and also from a historical) point of view, we see no reason not to keep on with the term \textit{module}. 
\end{remark}



The following example is inspired by extreme value theory. 

\begin{example}\label{ex:evt}
We equip the set $\mathbb{R}_+$ of nonnegative real numbers with its idempotent semifield structure, i.e.\ with the maximum operation for $\oplus$, and the usual multiplication. We write $\scit = (\scif, \oplus, \times)$. 
Let $\mu, \sigma, \xi$ be real numbers with $\sigma > 0$, and consider 
$$
M_{\mu, \sigma, \xi} = \left\{ x \in \mathbb{R} \cup \{ -\infty \}	 : 1 + \xi \frac{x - \mu}{\sigma} > 0 \right\}. 
$$
Then $(M_{\mu, \sigma, \xi}, \oplus, \mathbf{0})$ is an $\scit$-module if $\oplus$ denotes the usual maximum operation, if $\mathbf{0}$ denotes $\mu - \sigma/\xi$ if $\xi$ is positive, $-\infty$ otherwise, and if we consider the external multiplication defined by 
$$
x.t = \mu - \frac{\sigma}{\xi} + \frac{\sigma t^{\xi}}{\xi} (1 + \xi \frac{x - \mu}{\sigma}), 
$$
if $\xi$ is non-zero, and 
$$
x.t = x + \sigma \log(t) 
$$
otherwise, for all $x \in M_{\mu, \sigma, \xi}$, $t \in \mathbb{R}_+$. 
\end{example}

\section{Morphisms and linear forms}\label{sec:mf}


In this section, $\F$ is a union-complete filter selection, and $\cis$ is an idempotent semifield. When continuity assumptions on $\cis$ are required, we use the tools of $\Z$-theory introduced in Section~\ref{secpre2}. 


A \textit{morphism} (or \textit{linear map}) between two $\cis$-modules $M$ and $N$ is a map $f : M \rightarrow N$ satisfying both following conditions: 
\begin{itemize}
	\item \textit{homogeneity}: $f(x.t) = f(x).t$, 
	\item \textit{maxitivity}: $f(x \oplus y) = f(x) \oplus f(y)$, 
\end{itemize}
for all $x, y \in M$, $t \in \cis$. 
Or equivalently, $f(0) = 0$ and 
$
f(x \oplus y.t) = f(x) \oplus f(y).t, 
$
for all $x, y \in M$ and $t \in \cis$.  
A morphism $f$ is \textit{smooth} if, for all $\F$-sets $F$ of $M$ with infimum, $f(F)$ has an infimum in $N$ such that $f(\bigwedge F) = \bigwedge f(F)$. 
A (\textit{smooth}) \textit{linear form} 
on a $\cis$-module $M$ is a (smooth) morphism $v : M \rightarrow \cis$, where $\cis$ is considered as a $\cis$-module. 

\begin{translation}[Smoothness]
\textcolor{white}{}
\begin{enumerate}
	\item \textcolor{myred}{If $\F = \UpperStar$, then a morphism is smooth if and only if it preserves all nonempty existing infima. }
	\item \textcolor{myblue}{If $\F = \Filters$, then a morphism is smooth if and only if it is Scott-continuous. }
  \item \textcolor{mygreen}{If $\F = \PFilters$, then every morphism is smooth. }
\end{enumerate}
\end{translation}

\begin{example}[Example~\ref{ex:evt} continued]\label{ex:evt2}
The map $v : M_{\mu, \sigma, \xi} \rightarrow \mathbb{R}_+$ defined by $v(x) = (1 + \xi \frac{x - \mu}{\sigma})^{1/\xi}$ if $\xi$ is non-zero, $v(x) = \exp(\frac{x -\mu}{\sigma})$ otherwise, is a linear form on $M_{\mu, \sigma, \xi}$, smooth with respect to $\UpperStar$. 
\end{example}

\begin{example}
The set $\mathbb{R}_+$ is still equipped with its idempotent semifield structure.  
A \textit{maxitive measure} on a $\sigma$-algebra $\mathrsfs{B}$ is a map $\nu : \mathrsfs{B} \rightarrow \sci$ such that $\nu(\emptyset) = 0$ and 
$$
\nu(B_1 \cup B_2) = \nu(B_1) \oplus \nu(B_2), 
$$
for all $B_1, B_2 \in \mathrsfs{B}$. 
It is \textit{$\sigma$-maxitive} if it commutes with unions of nondecreasing sequences of elements of $\mathrsfs{B}$. 
The \textit{Shilkret integral} (or \textit{idempotent integral}) of some measurable map $f : E \rightarrow \sci$ with respect to a ($\sigma$-)maxitive measure $\nu$ on $\mathrsfs{B}$ is defined by 
\begin{equation*}
\dint{E} f . d\nu = \bigoplus_{t \in \mathbb{R}_+} t . \nu(f > t).  
\end{equation*}
Such a map $f$ is \textit{$\nu$-integrable} if its Shilkret integral is finite. Then the set $M$ of $\nu$-integrable maps is an $\scit$-module, and the Shilkret integral is a linear form on $M$. 
\end{example}



\begin{notations}\label{not:id}
Let $I$ be a subset of a $\cis$-module $M$. 
\begin{itemize}
	\item For all $t \in \cis\setminus\{0\}$, we write $I.t = \{ x.t : x \in I \}$, and $I.0 = \bigcap_{t \neq 0} I.t$. The reader is warned that $I.0$ does not coincide with $\{ 0 \}$ in general. 
	\item For all $x \in M$, we denote by $\langle I, x \rangle$ the set $\{ t \in \cis : x \in I.t\}$. 
\end{itemize}
\end{notations}



A subset $X$ is \textit{lower} if $X = \downarrow\!\!X$, where $\downarrow\!\!X := \{ y : \exists x \in X, y \leqslant x \}$. An \textit{ideal} $I$ of $M$ is a lower subset of $M$ such that $x \oplus y \in I$, for all $x, y \in I$. 
An ideal $I$ is \textit{smooth} if, for all $\F$-sets $F$ of $M$ with infimum, $\bigwedge F \in I$ implies $F \cap I \neq \emptyset$. 
An ideal $I$ is \textit{right-continuous} if $I.t = \bigcap_{s \gg t} I.s$ for all $t \in \cis$, and \textit{left-continuous} if $I.t = \bigcup_{t \gg s} I.s$ for all $t \in \cis$. 

The next proposition, which is inspired by the concept of \textit{Minkowski functional} (or \textit{gauge}) in convex analysis and by a remark of Nguyen et al.\ \cite{Nguyen97} on maxitive measures, provides a generic way of constructing a linear form from an ideal. We first prove a useful lemma. 

\begin{lemma}[Compare with \protect{\cite[Lemma~5.1]{Litvinov01}}]\label{lem:inf0}
Let $\cis$ be an idempotent semifield. Then $\cis \setminus \{0\}$ has an infimum, and 
\begin{itemize}
	\item $\bigwedge \cis \setminus \{0\} = 1$ if and only if $\cis = \{0,1\}$, 
	\item $\bigwedge \cis \setminus \{0\} = 0$ if and only if $\cis \neq \{0,1\}$. 
\end{itemize}
\end{lemma}

\begin{proof}
If $\cis = \{ 0, 1\}$, the result is clear, so suppose that $\cis \neq \{ 0, 1\}$. This implies the existence of some $t \in \cis, t > 0$ and $t \neq 1$. To show that $0$ is the infimum of $\cis \setminus \{0\}$, we pick some lower bound $s$ of $\cis \setminus\{0\}$, and we assume that $s > 0$. If $s = 1$, the definition of $s$ gives $t > 1$, hence $0 < t^{-1} < 1 = s$, a contradiction. As a consequence, $s \neq 1$. Since $1 \in \cis \setminus\{0\}$, we have $s < 1$, by definition of $s$. Hence, $0 < s^2 < s$, another contradiction. We conclude that $s = 0$, which proves that $0$ is the infimum of $\cis \setminus\{0\}$ whenever $\cis \neq \{0, 1\}$. 
\end{proof}

\begin{proposition}\label{nguyen bouchon-meunier3}
Let $\cis$ be an idempotent semifield, $\cis \neq \{0, 1\}$, and let $I$ be an ideal (resp.\ a smooth ideal) of $M$ 
such that, for all $x \in M$, $\langle I, x \rangle$ 
is an $\F$-set with infimum. Define $v : M \rightarrow \cis$ by 
\begin{equation}\label{ideals000}
v(x) = \bigwedge \langle I, x \rangle, 
\end{equation}
for all $x \in M$. 
If $I$ is right-continuous, then $I = \{ 1 \geqslant v \}$ and $v$ is a linear form (resp.\ a smooth linear form) on $M$. 
\end{proposition}


\begin{proof}
If $v$ is given by Equation~(\ref{ideals000}) with a right-continuous ideal $I$, then $v$ is order-preserving, for if $x \leqslant y$, $y \in I.t$, and $t \neq 0$, then $x.t^{-1} \leqslant y.t^{-1} \in I$, so that $x = (x.t^{-1}).t \in I.t$. Thus, one has $\{t \in \cis : x \in I.t \} \supset \{t \in \cis : y \in I.t \}$, so that $v(x) \leqslant v(y)$. 

Now let us show that $v(x) \oplus v(x') \geqslant v(x \oplus x')$. So let $s \gg v(x) \oplus v(x')$. Then $s \gg v(x)$, so there exists some $t \in \cis$ such that $s \geqslant t$ and $x \in I.t$. There is also some $t'$ with the corresponding properties with respect to $x'$. Note that $x, x' \in I.s$, which implies $x \oplus x' \in I.s$. 
Since $I$ is right-continuous, we have $x \oplus x' \in I.s_0$, where $s_0 := v(x) \oplus v(x')$. If $s_0  = 0$, then $x \oplus x' \in I.0$, so that $v(x \oplus x') = 0 = s_0 = v(x) \oplus v(x')$ by definition of $v$. Otherwise, we can write $x \oplus x' = y.s_0$, with $y \in I$. We thus have $v(x \oplus x') = v(y).s_0$. Since $y \in I$, $v(y) \leqslant 1$. This leads to $v(x \oplus x') \leqslant s_0$, i.e.\ $v(x\oplus x') \leqslant v(x) \oplus v(x')$. 

For $v$ to be a linear form, it remains to show that $v(x.t) = v(x).t$, for all $x \in M$, $t\in \cis$. This step is not difficult and left to the reader. 

It is clear that $I \subset \{ 1 \geqslant v \}$. For the reverse inclusion, let $x \in \{ 1 \geqslant v\}$, i.e.\ $1 \geqslant v(x)$. To prove that $x \in I$, we use the right-continuity of $I$, i.e.\ we show that $x \in I.s$ for all $s \gg 1$. We have $s \gg v(x) = \bigwedge \langle I, x \rangle$. The subset $\langle I, x \rangle$ is assumed to be an $\F$-set, so $s \in \langle I, x \rangle$, i.e.\ $x \in I.s$, which is the desired result. 

Suppose in addition that $I$ is smooth, and let us show that $v$ is smooth. First recall that, if $F$ is an $\F$-set of $M$ with infimum $f_0$ and $t\in \cis\setminus\{0\}$, then $F.t^{-1}$ is an $\F$-set such that $\bigwedge (F.t^{-1}) = f_0.t^{-1}$. We obtain 
\begin{align*}
v(f_0) &= \bigwedge \{ t \in \cis\setminus\{ 0 \} : f_0 \in I.t \} \\
 &= \bigwedge \{ t \in \cis\setminus\{ 0 \} : \bigwedge (F.t^{-1}) \in I \} \\
 &= \bigwedge \bigcup_{f \in F}\{ t \in \cis\setminus\{ 0 \} : f.t^{-1} \in I \}, 
\end{align*}
since $I$ is smooth. We deduce that $v(f_0) = \bigwedge_{f \in F} \bigwedge\{ t \in \cis\setminus\{ 0 \} : f \in I.t \} = \bigwedge_{f \in F} v(f)$, so $v$ is smooth. 
\end{proof}

When the range $\cis$ of the map $v$ is continuous, one can remove the assumption of right-continuity of $I$. This leads to the converse statement as follows.


\begin{proposition}\label{prop:nbm2}
Assume that $\cis$ is a (stably-)continuous cis. 
A map $v : M \rightarrow \cis$ is a (smooth) linear form on $M$ if and only if there is some (smooth) ideal $I$ of $M$ such that $\langle I, x \rangle$ 
is an $\F$-set and 
\begin{equation*}
v(x) = \bigwedge \langle I, x \rangle, 
\end{equation*}
for all $x \in M$. 
In this case: 
\begin{enumerate}
	\item\label{nbm21} $I$ is right-continuous if and only if $I = \{ x \in M : 1 \geqslant v(x) \}$; 
	\item\label{nbm22} 
	$I$ is left-continuous if and only if $I = \{ x \in M : 1 \gg v(x) \}$. 
\end{enumerate}
\end{proposition}

\begin{proof}
At first we consider the case where $\cis$ is a continuous cis. 
If $v$ is a linear form, define $I := \{ 1 \geqslant v \}$. This is an ideal such that, for all $t \neq 0$, $I.t = \{ t \geqslant v \}$, and, by Lemma~\ref{lem:inf0}, $I.0 = \{ x \in M : v(x) = 0 \}$. Since $\cis$ is continuous, $I$ is right-continuous. Moreover, $\langle I, x \rangle$ equals the principal filter generated by $v(x)$, hence is an $\F$-set, and $v(x) = \bigwedge \langle I, x \rangle$ for all $x$. 

If $\cis$ is stably-continuous and $v$ is smooth, we can rather define $I := \{ 1 \gg v \}$. By hypothesis the way-above relation is additive, so that $I$ is an ideal. Also, $I.t = \{ t \gg v \}$ by Lemma~\ref{lem:akiansinger}, and $I.0 = \{ x \in M : v(x) = 0 \}$ since $\cis$ is continuous. Left-continuity of $I$ holds by the interpolation property. Moreover, for all $x$, $\langle I, x \rangle = \twoheaduparrow v(x)$, which is an $\F$-set whose infimum is $v(x)$ since $\cis$ is continuous. Smoothness of $I$ is a consequence of the smoothness of $v$ and of the fact that $\langle I, x \rangle$ is nonempty. 


Conversely, assume that Equation~(\ref{ideals000}) is satisfied, and let us show that $v$ is a linear form. Let $J = \bigcap_{s \gg 1} I.s$. Then $J$ is an ideal of $M$ containing $I$. We prove that, for all $t \in \cis$, 
\begin{equation}\label{eq:eq}
t \geqslant v(x) \Leftrightarrow x \in J.t. 
\end{equation}
Using Lemma~\ref{lem:inf0}, it suffices to prove Equivalence~(\ref{eq:eq}) for  $t \neq 0$. If $t \geqslant v(x)$ and $s \gg 1$, then $s t \gg t$ by Lemma~\ref{lem:akiansinger}, 
so $s t \gg v(x)$. This gives $x \in I.s t$, hence $x.t^{-1} \in I.s$, for all $s \gg 1$. By definition of $J$ we get $x.t^{-1} \in J$, i.e.\ $x \in J.t$. Now we suppose that $x \in J.t$, and we want to show that $t \geqslant v(x)$. So let $s \gg t$. If $u = s t^{-1}$, then $u \gg 1$ (see again Lemma~\ref{lem:akiansinger}), 
so $x t^{-1} \in I.u$. Thus, $x \in I.s$. The definition of $v$ implies $s \geqslant v(x)$, for all $s \gg t$. By continuity of $\cis$ we have $t \geqslant v(x)$. So Equivalence~(\ref{eq:eq}) is proved. This also shows that $J$ is right-continuous and that $\langle J, x \rangle$ is an $\F$-set whose infimum is $v(x)$, for all $x \in M$. By Proposition~\ref{nguyen bouchon-meunier3}, $v$ is a linear form, and, as in the proof of Proposition~\ref{nguyen bouchon-meunier3}, $v$ is smooth if $I$ is smooth. 


To finish the proof, suppose again that $v$ is a linear form defined by Equation~(\ref{ideals000}). 
If $I$ is right-continuous, then the previous point implies $I = J$ and $I.t = \{ t \geqslant v \}$, for all $t \in \cis$, so Item~(\ref{nbm21}) is proved. 
If $I$ is left-continuous, the inclusion $I \supset \{ 1 \gg v \}$ is clear, by definition of $v$ and $\gg$. If $x \in I$, then $x \in I.s$ for some $s \in \cis$ such that $1 \gg s$, by left-continuity of $I$. This implies that $s \geqslant v(x)$, so that $1 \gg v(x)$, and Item~(\ref{nbm22}) is proved. 
\end{proof}

\begin{translation}
Back to the three main instances of filter selections, the assumptions of Proposition~\ref{prop:nbm2} translate as follows. 
\begin{enumerate}
	\item \textcolor{myred}{
	$\langle I, x \rangle$ is an $\UpperStar$-set if and only if $\langle I, x \rangle$ is nonempty. This condition is satisfied for all $x \in M$ as soon as $I \neq \{0\}$ and, for all $x, y \in M$ with $y \neq 0$, there exists some $t \in \cis$ with $x \leqslant y.t$. }
	\item \textcolor{myblue}{
	$\langle I, x \rangle$ is a $\Filters$-set if and only if $\langle I, x \rangle$ is nonempty. As above, this condition is satisfied for all $x \in M$ as soon as $I \neq \{0\}$ and, for all $x, y \in M$ with $y \neq 0$, there exists some $t \in \cis$ with $x \leqslant y.t$. }
	\item \textcolor{mygreen}{
	$\langle I, x \rangle$ is a $\PFilters$-set if and only if $\langle I, x \rangle$ has a least element.}
\end{enumerate}
\end{translation}

The last case leads to the following corollary. 

\begin{corollary}\label{cor:equiv}
Let $\cis$ be an idempotent semifield, $\cis \neq \{0, 1\}$. 
A map $v : M \rightarrow \cis$ is a linear form on $M$ if and only if there is some ideal $I$ of $M$ such that 
$$
x \in I.t \Longleftrightarrow t \geqslant v(x), 
$$
for all $x \in M$, $t \in \cis$. 
In this case, $I$ equals $\{ 1 \geqslant v \}$. 
\end{corollary}



\begin{proof}
Let $\F$ be the filter selection $\PFilters$ that selects principal filters. With this choice, $\cis$ is continuous, and the way-above relation coincides with $\geqslant$, so that $I$ is necessarily right-continuous. To conclude, use Proposition~\ref{prop:nbm2}. 
\end{proof}


At this early stage, the reader may already understand, from the previous proof, where the ``paradox'' evoked in the Introduction comes from: no continuity assumption seems to be needed in the terms of Corollary~\ref{cor:equiv}, but this is simply due to the fact that, with respect to the filter selection $\PFilters$, the cis $\cis$ is always continuous. This will be made even clearer in Section~\ref{sec:continuous}, where we shall deal with the representation of continuous linear forms.

\section{Continuous linear forms on a complete module}\label{sec:continuous}

\subsection{Continuity, residuation}

In this section, we prove a representation theorem for continuous linear forms on a complete module. 
Let $\F$ be a union-complete filter selection, and let $M, N$ be modules over an idempotent semifield $\cis$. 
A morphism $f : M \rightarrow N$ is 
\textit{continuous} if it is smooth and such that, for every subset $X \subset M$ with a supremum in $M$, $f(X)$ has a supremum in $N$ satisfying $f(\bigoplus X) = \bigoplus f(X)$. 

\begin{translation}[Continuity]
\textcolor{white}{}
\begin{enumerate}
	\item \textcolor{myred}{If $\F = \UpperStar$, then a morphism is continuous if and only if it preserves all existing infima and suprema.}
	\item \textcolor{myblue}{If $\F = \Filters$, then a morphism is continuous if and only if it is bi-Scott-continuous (this notion was called \textit{wo-continuity} by Shpiz \cite{Shpiz07}).}
  \item \textcolor{mygreen}{If $\F = \PFilters$, then a morphism is continuous if and only if it preserves all existing suprema (this notion of continuity is the one adopted by Cohen et al.\ \cite{Cohen04}; Litvinov et al.\ \cite{Litvinov01} and Shpiz \cite{Shpiz07} called such a morphism a \textit{b-morphism}).}
\end{enumerate}
\end{translation}

We say that $M$ is \textit{completable} 
if, for all $x \in M$, the map $\cis \rightarrow M, t \mapsto x.t$ is a continuous morphism, i.e.\ if, for all $x \in M$ and all $T \subset \cis$ with supremum, 
\begin{equation}\label{eq:supmod}
x . \bigoplus T = \bigoplus_{t \in T} x.t, 
\end{equation}
and if for all $x \in M$ and all $\F$-sets $F$ of $\cis$ with infimum, 
\begin{equation}\label{eq:supmodf}
x . \bigwedge F = \bigwedge_{f \in F} x.f. 
\end{equation}

\begin{remark}\label{rk:infreached}
Note that, if Equation~(\ref{eq:supmod}) is satisfied for every $T \subset \cis$ with supremum, then Equation~(\ref{eq:supmodf}) is also satisfied for every $F \subset \cis$ with \textit{non-zero infimum} (one does not need $F$ be to an $\F$-set in this case). Hence in the definition of completability the role of Equation~(\ref{eq:supmodf}) is to control the behaviour of $\cis \rightarrow M, t \mapsto x.t$ around zero. Therefore, \textcolor{myred}{the case $\F = \UpperStar$ is demanding}, while \textcolor{mygreen}{with $\F = \PFilters$ this behaviour is unconstrained}. 
\end{remark}

Also, $M$ is \textit{complete} if it is completable and such that every upper-bounded subset (resp.\ every $\F$-set) 
has a supremum (resp.\ an infimum). 
In Section~\ref{sec:dmcc}, Theorem~\ref{thm:nc} will define the concept of \textit{normal completion} of a completable module and show that completability is equivalent to embeddability into a complete module. 

A map $f : M \rightarrow N$ between $\cis$-modules $M, N$ is \textit{residuated} if there exists a (necessarily unique) map $f^{\#} : N \rightarrow M$, called the \textit{adjoint} of $f$, and 
satisfying 
\begin{equation*}
x \leqslant f^{\#}(y) \Longleftrightarrow y \geqslant f(x), 
\end{equation*}
for all $x\in M$, $y \in N$. 
Residuated maps are related to Galois connections, see Ern\'e et al.\ \cite{Erne93}. 
A \textit{residuated form} on $M$ is a homogeneous residuated map from $M$ to $\cis$. 
A map $v : M \rightarrow \cis$ is \textit{non-degenerate} if $\{ x \in M  : 1 \geqslant v(x) \}$ is upper-bounded. 
For instance the map $M \ni x \mapsto 0 \in \cis$ is a non-degenerate (continuous) linear form if and only if $M$ has a greatest element. 

\begin{lemma}\label{lem:rescont}
Let $M$ be a complete module over a cis $\cis$. Then a map $v : M \rightarrow \cis$ is a smooth residuated form if and only if it is a non-degenerate continuous linear form. 
\end{lemma}

\begin{proof}
Necessity is clear. For sufficiency, let $v$ be a non-degenerate continuous linear form, and let $w : \cis \rightarrow M, t \mapsto \bigoplus \{ x \in M : t \geqslant v(x) \}$. This map is well-defined since $v$ is non-degenerate and homogeneous. 
If $t \geqslant v(x)$, then $x \leqslant w(t)$ by definition of $w$. Conversely, if $x \leqslant w(t)$, then $v(x) \leqslant v(w(t))$,  and since $v$ preserves arbitrary existing suprema, $v(x) \leqslant \bigoplus \{ v(x') : x' \in M, t \geqslant v(x') \} \leqslant t$. This proves that $w$ is the adjoint of $v$, hence $v$ is a (smooth) residuated form. 
\end{proof}

\begin{remark}
\textcolor{mygreen}{If $\F = \PFilters$, the previous lemma identifies residuated forms with non-degenerate \textit{b-linear functionals} in the sense of Litvinov et al.\ \cite{Litvinov01}.}
\end{remark}


\subsection{Archimedean elements and scalar product}\label{ssec:archi}

As explained in the paragraph after Lemma~\ref{lem:akiansinger}, a cis $\cis$ has no greatest element. However, we can conventionally add a top $\top$ to $\cis$ and define $\overline{\cis} = \cis \cup \{ \top \}$. Naturally extending $\oplus$ and $\times$ to $\overline{\cis}$ by $t \oplus \top = \top \oplus t = \top$, $t.\top = \top .t = \top$ if $t \neq 0$, and $0.\top = \top.0 = 0$, we see that $\overline{\cis}$ has the structure of a complete dioid. 
We also define $\top^{-1} = 0$ and $0^{-1} = \top$. 
If $x, c \in M$, we let 
$$
x\backslash c = \bigoplus \{ t \in \cis : c \geqslant x.t \}
$$ 
whenever this set is upper-bounded, and $x \backslash c = \top$ otherwise. 
Also, the infimum of the subset $F_c(x) = \{ t \in \cis : c.t \geqslant x \}$ in $\overline{\cis}$ is denoted by $\langle c, x \rangle$, and one can easily check that   
$$
\langle c, x \rangle = (x\backslash c)^{-1}, 
$$
for all $x, c \in M$. 
We are interested in conditions on $c$ ensuring that the map $x \mapsto \langle c, x \rangle$ is a continuous or a residuated linear form on $M$. 
So we need the  

\begin{definition}\label{def:archi}
Let $M$ be a module over an idempotent semifield $\cis$. An element $c \in M$ is called \textit{archimedean} if both following conditions are satisfied: 
\begin{itemize}
	\item the subset $F_c(x)$ is nonempty for all $x \in M$, 
	\item if the infimum of $F_c(x)$ is zero, then $F_c(x) \supset \cis \setminus \{0\}$, 
\end{itemize} 
where $F_c(x)$ denotes the subset $\{ t \in \cis : c.t \geqslant x \}$. 
\end{definition}

The first condition implies that the bracket $\langle c, x \rangle$ is well-defined in $\cis$ for all $x \in M$. 
The second condition may seem unnatural and so deserves some explanation. First observe that it will automatically be satisfied in any of the following standard situations: 
\begin{itemize}
	\item if $\F \in \{\UpperStar, \Filters\}$ and $M$ is completable, 
	\item if the cis $\cis$ is continuous with respect $\Filters$, 
	\item in particular if $\cis$ is a totally ordered cis (e.g.\ $\cis = \scit$). 
\end{itemize}

To see how it works, let us suppose in the following lines that $M$ is a completable module over a cis $\cis$, and that $c$ is such that $F_c(x)$ is nonempty for all $x \in M$. 
How far is then $x \mapsto \langle c, x \rangle$ from being a residuated map?

If $\langle c, x \rangle$ is non-zero, then the infimum of the subset $F_c(x)$ is reached (see Remark~\ref{rk:infreached}), so that $c.\langle c, x \rangle \geqslant x$ or, in other words, 
\begin{equation}\label{eq:residsp}
x \leqslant c.t  \Longleftrightarrow t \geqslant \langle c, x \rangle, 
\end{equation}
for all $t \in \cis$. 
However, this equivalence is no longer guaranteed if $\langle c, x \rangle = 0$. This is where a ``smooth'' behaviour of $\cis \rightarrow M, t \mapsto x.t$ around zero is needed, in accordance with Remark~\ref{rk:infreached}; so at this stage we must distinguish between the different filter selections. 

The important fact is that the subset $F_c(x)$ is always filtered (if non\-empty) by Equation~(\ref{eq:cisinf}). \textcolor{myblue}{This makes $\Filters$ \textit{the most natural filter selection} to use on $\cis$-modules}.
As a consequence, if $\F \in \{\UpperStar, \Filters \}$, Equivalence~(\ref{eq:residsp}) is satisfied by completability of $M$, even if $\langle c, x \rangle = 0$. Thus, the second condition in Definition~\ref{def:archi} is fullfilled, and the map $x \mapsto \langle c, x \rangle$ is residuated. Moreover, $\langle c, x \rangle = 0$ implies $x = 0$. 

\textcolor{mygreen}{The case $\F = \PFilters$ is more delicate}. 
We might include in the definition of $c$ that $F_c(x)$ be an $\F$-set, so here a principal filter; but this would imply that $\langle c, x \rangle \neq 0$ whenever $x \neq 0$, a property that is not desirable for applications (see e.g.\ the case of the Riesz representation theorem in Section~\ref{sec:riesz}). 
That is why we introduced a second \textit{ad hoc} condition in the definition of an archimedean element. 
The following proposition gives sufficient conditions on $\cis$ for this condition to hold. 

\begin{proposition}
Let $\cis$ be an idempotent semifield. Consider the following conditions: 
\begin{enumerate}
	\item\label{cond1} $1$ is way-above $0$ with respect to $\Filters$; 
	\item\label{cond2} there is a $t \in \cis$ way-above $0$ with respect to $\Filters$; 
	\item\label{cond3} every filter with a zero infimum contains $\cis\setminus\{0\}$; 
	\item\label{cond4} every unbounded ideal coincides with $\cis$; 
	\item\label{cond7} $\cis$ is continuous with respect to $\Filters$; 
	\item\label{cond6} $\cis$ is totally ordered; 
\end{enumerate}
Then (\ref{cond6}) $\Rightarrow$ (\ref{cond7}) $\Rightarrow$ (\ref{cond4}) $\Leftrightarrow$ (\ref{cond3}) $\Leftrightarrow$ (\ref{cond2}) $\Leftrightarrow$ (\ref{cond1}). 
If any of these conditions is satisfied, then 
\begin{itemize}
	\item $\cis$ is join-continuous (with respect to $\Filters$), i.e.\ satisfies $s \oplus \bigwedge F = \bigwedge (s \oplus F)$ for all filters $F$ and $s \in \cis$; 
	\item the second condition of Definition~\ref{def:archi} always holds; 
	\item for all $t \in \cis \setminus \{0\}$ and $s < 1$, there is some $n \in \mathbb{N}$ such that $t \geqslant s^n$.  
\end{itemize}
\end{proposition}

An archimedean element $c$ of $M$ is \textit{strongly archimedean} if $t \gg s$ implies $c.t \gg c.s$ for all $s, t \in \cis$. 
For an archimedean element $c$, the map $x \mapsto \langle c, x \rangle$ is smooth only if $c$ is strongly archimedean; the converse statement holds as soon as $\cis$ is continuous.  

The following result justifies the term \textit{scalar product} for the bracket $\langle \cdot, \cdot \rangle$ (see also Cohen et al.\ \cite[Section~3]{Cohen04} for more on this topic; note that these authors preferred to call scalar product the bracket $(\cdot \backslash \cdot)$ rather than $\langle \cdot, \cdot \rangle$). 

\begin{lemma}\label{lem:arch1}
Let $M, N$ be modules over a cis $\cis$, 
and let $f : M \rightarrow N$ be a (smooth) residuated linear map. 
If $c$ is a (strongly) archimedean element of $N$ then $f^{\#}(c)$ is a (strongly) archimedean element of $M$, and we have 
$$
\langle c, f(x) \rangle = \langle f^{\#}(c), x \rangle, 
$$
for all $x \in M$, where $f^{\#}$ denotes the upper adjoint of $f$. 
\end{lemma}

\begin{proof}
If $t \in \cis\setminus \{0\}$, we have $c.t \geqslant f(x) \Leftrightarrow c \geqslant f(x.t^{-1}) \Leftrightarrow f^{\#}(c) \geqslant x.t^{-1} \Leftrightarrow f^{\#}(c).t \geqslant x$. Since $c$ is archimedean, $F_c(f(x))$ contains some non-zero element, so that $F_{f^{\#}(c)}(x)$ is nonempty, so the first condition for $f^{\#}(c)$ to be archimedean is checked. 
Now if the subset $F_{f^{\#}(c)}(x)$ has zero infimum, then either it contains $0$ (and in this case $x = 0$ so that $F_{f^{\#}(c)}(x) = \cis)$ or it does not. In the latter case, the series of equivalence at the beginning of the proof shows that $F_{f^{\#}(c)}(x) = F_c(f(x)) \setminus \{0\}$. This implies that $F_c(f(x))$ has zero infimum, thus contains $\cis\setminus\{0\}$ since $c$ is archimedean. Therefore, $F_{f^{\#}(c)}(x)$ also contains $\cis\setminus\{0\}$, and we have proved that $f^{\#}(c)$ is archimedean. 
Using again the equivalence $c.t \geqslant f(x) \Leftrightarrow f^{\#}(c).t \geqslant x$ for all $t \in \cis \setminus\{0\}$, we deduce that $t \geqslant \langle c, f(x) \rangle \Leftrightarrow t \geqslant \langle f^{\#}(c), x \rangle$ for all $t \in \cis \setminus \{0\}$, so 
$$
\langle c, f(x) \rangle = \langle f^{\#}(c), x \rangle. 
$$

For the rest of the proof, assume that $f$ is smooth and that $c$ is strongly archimedean. Let $t \gg s$, and let us show that $f^{\#}(c).t \gg f^{\#}(c).s$. For this purpose, let $F$ be an $\F$-set of $M$ with infimum such that $f^{\#}(c).s \geqslant \bigwedge F$. Then $f^{\#}(c.s) \geqslant \bigwedge F$, hence $c.s \geqslant f(\bigwedge F)$. Since $f$ is smooth, this implies that $c.s \geqslant \bigwedge f(F)$. Since $\uparrow\!\! f(F)$ is an $\F$-set of $\cis$ and $c.t \geqslant c.s$, we obtain $c.t \geqslant f(x)$ for some $x \in F$. This gives $f^{\#}(c).t = f^{\#}(c.t) \geqslant x$, and the result is proved. 
%
\end{proof}

We are now in a position to prove the main result of this section. 

\begin{theorem}[Compare \protect{\cite[Theorems~5.1-5.2]{Litvinov01}}, \protect{\cite[Corollary~39]{Cohen04}}]\label{thm:litv} 
Suppose that $M$ is a complete module over a continuous cis $\cis$, 
and let $v : M \rightarrow \cis$. 
The following conditions are equivalent: 
\begin{enumerate}
	\item\label{litv1} $v$ is a smooth residuated form on $M$, 
	\item\label{litv2} $v$ is a non-degenerate continuous linear form on $M$, 
	\item\label{litv3} $v(\cdot) = \langle c, \cdot \rangle$, for some strongly archimedean element $c \in M$. 
\end{enumerate}
If these conditions are satisfied, then $c$ is unique and equals the supremum of the set $\{ x\in M : 1 \geqslant v(x) \}$. 
\end{theorem}


\begin{proof}
%
Equivalence between (\ref{litv1}) and (\ref{litv2}) is given by Lemma~\ref{lem:rescont}, and the implication (\ref{litv3}) $\Rightarrow$ (\ref{litv1}) was the purpose of Paragraph~\ref{ssec:archi}. 
So let us prove that (\ref{litv1}) implies (\ref{litv3}). Let $v$ be a smooth residuated form on $M$. Define $c$ as the supremum of the set $\{ x\in M : 1 \geqslant v(x) \}$, i.e.\ $c = v^{\#}(1)$, where $v^{\#}$ is the adjoint of $v$. 
If one notices that $1$ is a strongly archimedean element in $\cis$, then $c$ is a strongly archimedean element in $M$ by Lemma~\ref{lem:arch1}, and one has 
$$
\langle 1, v(x) \rangle = \langle v^{\#}(1), x \rangle, 
$$
for all $x \in M$, that is 
$
v(x) = \langle c, x \rangle, 
$
for all $x \in M$. 
Uniqueness is deduced from the fact that $x \leqslant c \Leftrightarrow 1 \geqslant \langle c, x \rangle$. 
\end{proof}

\begin{example}[Example~\ref{ex:evt2} continued]\label{ex:evt3}
We introduced the linear form $v$ on $M_{\mu, \sigma, \xi}$ defined by $v(x) = (1 + \xi \frac{x - \mu}{\sigma})^{1/\xi}$ if $\xi$ is non-zero, $v(x) = \exp(\frac{x -\mu}{\sigma})$ otherwise. An easy computation shows that $\mu$ is the supremum of $\{ x \in M_{\mu, \sigma, \xi} : 1 \geqslant v(x) \}$, and  that $v(x) = \langle \mu, x \rangle$ 
for all $x \in M_{\mu, \sigma\, \xi}$. 
Moreover, $\mu$ is strongly archimedean (with respect to $\F = \UpperStar$), for if $t > s$, then $\mu.t = \mu + \sigma \frac{t^{\xi} - 1}{\xi} > \mu.s$ if $\xi$ is non-zero, and $\mu.t = \mu + \sigma \log(t) > \mu.s$ otherwise. 
Hence, $v$ is a smooth residuated form on $M_{\mu, \sigma, \xi}$ (where smoothness is understood with respect to $\F = \UpperStar$). 
\end{example}

This theorem upgrades a result by Cohen et al.\ \cite[Corollary~39]{Cohen04} deduced from a geometric Hahn--Banach type theorem \cite[Theorem~34]{Cohen04}. 
A different formulation will be proved in Section~\ref{sec:rmf}, in the framework of \textit{module extensions}. 
As for now, we use Theorem~\ref{thm:litv} to reprove the Radon--Nikodym theorem for the Shilkret integral. 


%

\section{The Radon--Nikodym theorem: a different perspective}\label{sec:rn}

We come back to the Radon--Nikodym theorem for the Shilkret integral surveyed in \cite[Chapter~I]{Poncet11}; 
here we deduce this result from the order-theoretical developments of the previous section. 
See \cite[Chapters~I-II]{Poncet11} for definitions and notations related to maxitive measures. 
The filter selection used throughout this section is $\PFilters$, i.e.\ the one that selects principal filters.

\subsection{Complements on $\sigma$-complete modules}

The next result prepares applications to the Radon--Nikodym theorem. It gives sufficient conditions on a module $M$ over a cis $\cis$ in order that every linear form on $M$ be continuous. 
The module $M$ is \textit{$\sigma$-complete} 
if every upper-bounded countable 
subset has a supremum and if $M$ is completable, i.e.\ if 
$$
x . \bigoplus T = \bigoplus_{t \in T} x.t, 
$$
for all $x \in M$, $T \subset \cis$ with supremum. 
We say that $M$ is \textit{$\sigma$-principal} if every upper-bounded $\sigma$-ideal is principal, i.e.\ of the form $\downarrow\!\! x$ for some $x \in M$. 
A subset $G$ of $M$ is \textit{generating} if, for all $x \in M$, $x = \bigoplus \downarrow\!\! x \cap G$. Also, the module is \textit{countably generated}  
if there exists a generating subset 
$G$ such that $\downarrow\!\! x \cap G$ is countable, for all $x \in M$. 
A linear form $v$ on $M$ is \textit{$\sigma$-continuous} if, for every countable subset $X \subset M$ admitting a supremum in $M$, $v(X)$ has a supremum in $\cis$ satisfying $v(\bigoplus X) = \bigoplus v(X)$. 

\begin{proposition}\label{principalcomplet}
Let $M$ be a 
$\sigma$-complete module over a cis $\cis$. 
\begin{enumerate}
	\item\label{pal1} If $M$ is 
	countably generated, then $M$ is 
	$\sigma$-principal. 
	\item\label{pal2} If $M$ is 
	$\sigma$-principal, then $M$ is 
	complete and every $\sigma$-continuous linear form is continuous. 
\end{enumerate}
\end{proposition}

\begin{proof}
(\ref{pal1}) Assume that $M$ is 
countably generated by some subset $G$. Let $I$ be an upper-bounded $\sigma$-ideal of $M$. If $u$ is an upper-bound of $I$, the subset $I \cap G$ is included in the countable subset $\downarrow\!\! u \cap G$, hence is countable. So let $x := \bigoplus G \cap I \in I$. It is easily seen that $x = \bigoplus I$, hence $I = \downarrow\!\! x$, i.e.\ $I$ is a principal ideal. 

(\ref{pal2}) Assume that $M$ is 
$\sigma$-principal, and let $X$ be an upper-bounded subset of $M$. The $\sigma$-ideal $I$ generated by $X$ is made up of elements lower than joins of countable subsets of $X$. Since $I$ is upper-bounded, it is principal, so we have $I = \downarrow \!\!x$ for some $x \in I$, and $x$ is of the form $x = \bigoplus G$ for some countable subset $G$ of $X$. Thus, we have $x = \bigoplus G = \bigoplus I = \bigoplus X$, so that $M$ is 
complete. Moreover, if $v : M \rightarrow \cis$ is a $\sigma$-continuous linear form, then $v(\bigoplus X) = v(\bigoplus G) = \bigoplus v(G) \leqslant \bigoplus v(X)$, hence $v$ is continuous. 
%
%
\end{proof}

\subsection{Maxitive measures as linear forms}

Let $\mathrsfs{E}$ be a semi-$\sigma$-algebra on some nonempty set $E$ and $\nu, \tau$ be $\sigma$-maxitive measures on $\mathrsfs{E}$. We shall assume that $\nu$ is finite and absolutely continuous with respect to $\tau$, in symbols $\nu \absc \tau$. 
In order to apply the results of Section~\ref{sec:continuous}, we merely want to get rid of the collection of $\tau$-negligible subsets. 
We could consider the quotient space $\mathrsfs{E}/\tau$, but this would not give us the structure of module over the idempotent semifield $\scit = (\scif, \max, \times)$ that we need. A better idea is the following. 

Let $\mathrsfs{E}_+^{\nu} = \mathrsfs{L}^1_+(E, \mathrsfs{E}, \nu)$ be the set of all $\nu$-integrable lsm maps $g : E \rightarrow \sci$. A map $n$ in $\mathrsfs{E}_+^{\nu}$ is \textit{$\tau$-negligible} if the subset $\{ n > 0 \}$ is $\tau$-negligible. 
We define on $\mathrsfs{E}_+^{\nu}$ the equivalence relation $\langle\rangle$ 
by $f \langle\rangle g$ if and only if, for some $\tau$-negligible map $n$, we have $f \oplus n = g \oplus n$. We denote by $\langle g \rangle$ the equivalence class of a $g \in \mathrsfs{E}_+^{\nu}$. 
Then the quotient set $\mathbf{M} := \mathrsfs{E}_+^{\nu}/\tau := \mathrsfs{E}_+^{\nu}/\langle\rangle$ is a $\sigma$-complete module over $\scit$ with external multiplication $\mathbf{f}.t := \langle t.f \rangle$ and countable addition $\bigoplus_{j = 1}^\infty \mathbf{g}_j = \langle \bigoplus_{j=1}^\infty g_j \rangle$, 
for all $t \in \scif$ and $\mathbf{f} = \langle f \rangle, \mathbf{g}_j = \langle g_j \rangle \in \mathbf{M}$. The induced partial order is $\mathbf{f} \leqslant \mathbf{g}$ if and only if $\{ f > g \}$ is $\tau$-negligible. The reader can check that the previous definitions do not depend on the choice of the representatives $f$, $g$, etc. 

Recall that $\tau$ on $\mathrsfs{E}$ is \textit{localizable} if, for each $\sigma$-ideal $\mathrsfs{I}$ of $\mathrsfs{E}$, there exists some $L \in \mathrsfs{E}$ such that 
\begin{itemize}
	\item $S \setminus L$ is $\tau$-negligible, for all $S \in \mathrsfs{I}$, 
	\item if there is some $G \in \mathrsfs{E}$ such that $S \setminus G$ is $\tau$-negligible for all $S \in \mathrsfs{I}$, then $L \setminus G$ is $\tau$-negligible. 
\end{itemize}
In this case, $\mathrsfs{I}$ is said to be \textit{localized} in $L$. 

\begin{proposition}\label{prop:loc1}
Let $\nu, \tau$ be $\sigma$-maxitive measures on $\mathrsfs{E}$. Assume that $\nu$ is finite and such that $\nu \absc \tau$. 
Then $\tau$ is localizable (resp.\ $\sigma$-principal) if and only if $\mathrsfs{E}_+^{\nu}/\tau$ is a complete module (resp.\ a $\sigma$-principal module). 
\end{proposition}

\begin{proof}
A preliminary remark is that, since $\nu$ is finite, the lsm map $1_G$ is $\nu$-integrable for all $G \in \mathrsfs{E}$. 
Assume that $\mathbf{M} = \mathrsfs{E}_+^{\nu}/\tau$ is a complete module, and let $\mathrsfs{I}$ be a $\sigma$-ideal of $\mathrsfs{E}$. Then the $\sigma$-ideal $I$ generated by $\{ \langle 1_S \rangle : S \in \mathrsfs{I} \}$ is upper-bounded (by $\langle 1_E \rangle$) in $\mathbf{M}$. Hence there is some $f \in \mathrsfs{E}_{+}^{\nu}$ such that $\langle f \rangle$ is the supremum of $I$. In particular, if $S \in \mathrsfs{I}$, there is some $\tau$-negligible lsm map $n$ such that $1_S \leqslant f \oplus n$, so that $S \subset L \cup \{ n > 0 \}$, where $L := \{ f > 2^{-1} \}$. As a consequence, $S \setminus L$ is $\tau$-negligible for all $S \in \mathrsfs{I}$. To show that $\mathrsfs{I}$ is localized in $L$, let $G \in \mathrsfs{E}$ such that $S \setminus G$ is $\tau$-negligible for all $S \in \mathrsfs{I}$. 
Then $\langle 1_G \rangle$ is an upper-bound of $I$, so that $\langle f \rangle \leqslant \langle 1_G \rangle$ by definition of $f$. Since $2^{-1}.1_L \leqslant f$, we deduce that $L \setminus G$ is $\tau$-negligible, hence that $\tau$ is localizable. 

If $\mathbf{M}$ is 
$\sigma$-principal, we can impose $L$ to belong to $\mathrsfs{I}$ and to be such that $\langle 1_L \rangle$ generates $I$. Then $L$ generates $\mathrsfs{I}$, and this proves that $\tau$ is $\sigma$-principal. 

Conversely, suppose that $\tau$ is localizable, and let $I$ be an upper-bounded $\sigma$-ideal of $\mathbf{M}$. If $q \in \mathbb{Q}_+$, let $\mathrsfs{I}_q = \{ \{f>q\} : \langle f \rangle \in I \}$. This is a $\sigma$-ideal, hence it is localized in some $L_q \in \mathrsfs{E}$. 
Let $\langle g \rangle$ be an upper-bound of $I$. Then $S \setminus \{ g > q \}$ is $\tau$-negligible, for all $S \in \mathrsfs{I}_q$ and all $q \in \mathbb{Q}_+$. Since $\mathrsfs{I}_q$ is localized in $L_q$ we deduce that $L_q \setminus \{ g > q\}$ is $\tau$-negligible. This implies that the map $\ell$ defined by $\ell = \bigoplus_{q \in \mathbb{Q}_+} q.1_{L_q}$ is $\nu$-integrable and satisfies $\langle \ell \rangle \leqslant \langle g \rangle$. 
To show that $\langle \ell \rangle$ is the supremum of $I$, it suffices to prove that $\langle \ell \rangle$ is an upper-bound of $I$. 
If $\langle f \rangle \in I$, there exists some $\tau$-negligible subset $N_q \in \mathrsfs{E}$ such that $\{ f > q \} \subset L_q \cup N_q$. If $n = \bigoplus_{q \in \mathbb{Q}} q.1_{N_q}$, then $\{ n > 0 \} \subset \bigcup_{q \in \mathbb{Q}_+} N_q$, so $n$ is $\tau$-negligible. We have $f \leqslant \ell \oplus n$, so that $\langle f \rangle \leqslant \langle \ell \rangle$. This proves that $\langle \ell \rangle$ is the supremum of $I$, and that $\mathbf{M}$ is complete. 

If $\tau$ is $\sigma$-principal, then the set $L_q$ can be choosen of the form $\{ \ell_q > q \}$, where $\langle \ell_q \rangle \in I$. It can be seen that $\ell$ and $\bigoplus_{q \in \mathbb{Q}_+} \ell_q$ are equivalent, so that $\langle \ell \rangle = \langle \bigoplus_{q \in \mathbb{Q}_+} \ell_q \rangle \in I$. This shows that $I$ is principal, so that $\mathbf{M}$ is a $\sigma$-principal module. 
\end{proof}

We denote by $\mathbf{v}$ the map induced by $\nu$ on $\mathbf{M}$, i.e.\ 
$$
\mathbf{v}(\mathbf{f}) = \ddint f \, d\nu, 
$$
for all $\mathbf{f} = \langle f \rangle \in \mathbf{M}$. Since $\mathrsfs{E}_+^{\nu}$ demands $\nu$-integrable maps, we have $\mathbf{v}(\mathbf{f}) < \infty$, so $\mathbf{v}$ is a $\sigma$-continuous linear form on $\mathbf{M}$. 
We shall say that $\nu$ is \textit{$\tau$-continuous} if $\mathbf{v}$ is continuous. 
As a corollary of Theorem~\ref{thm:litv} we have the following result. Recall that a map $g : E \rightarrow \sci$ is \textit{upper-semimeasurable} or \textit{usm} if $\{ g < t \} \in \mathrsfs{E}$ for all $t \in \scif$. 

\begin{theorem}\label{thm:usm}
Let $\nu$, $\tau$ be $\sigma$-maxitive measures on $\mathrsfs{E}$. Assume that $\nu$ is finite and $\tau$ is localizable. 
Then the following assertions are equivalent: 
\begin{itemize}
	\item $\nu \absc \tau$ and $\nu$ is 
	$\tau$-continuous, 
	\item $\nu$ has a usm relative density with respect to $\tau$. 
\end{itemize}
\end{theorem}


\begin{proof}
Since $\nu$ is finite and $\tau$ is localizable, $\mathbf{M} =	\mathrsfs{E}_+^{\nu}/\tau$ is a complete module by Proposition~\ref{prop:loc1}. 
From the identity $\langle \mathbf{c}, \mathbf{f} \rangle 
= \bigoplus_{x\in E}^{\tau} \frac{f(x)}{c(x)}$, which holds for all $\mathbf{f} = \langle f \rangle, \mathbf{c} = \langle c \rangle \in \mathbf{M}$, we deduce that $\nu$ has a usm relative density with respect to $\tau$ if and only if there is some $\mathbf{c} \in \mathbf{M}$ such that $\mathbf{v}(\cdot) = \langle \mathbf{c}, \cdot \rangle$. 
This situation implies that $\nu \absc \tau$ and $\nu$ is $\tau$-continuous by Theorem~\ref{thm:litv}. 

For the converse statement, assume that $\nu \absc \tau$ and $\nu$ is $\tau$-continuous. We only have to prove that $\mathbf{v}$ is non-degenerate, for then Theorem~\ref{thm:litv} gives the desired result. 
So let $f \in \mathrsfs{E}_+^{\nu}$ such that $\mathbf{v}(\mathbf{f}) \leqslant 1$, where $\mathbf{f} = \langle f \rangle$. Then, for all rational numbers $q > 0$, the subset $\{f > q \}$ is in the $\sigma$-ideal $\mathrsfs{I}_q = \{ G \in \mathrsfs{E} : \nu(G) \leqslant q^{-1} \}$. 
Since $\tau$ is localizable, $\mathrsfs{I}_q$ is localized in some $L_q \in \mathrsfs{E}$, 
and since $\nu$ is $\tau$-continuous, we have $\nu(L_q) \leqslant q^{-1}$. 
As a consequence, the map $g = \bigoplus_{q \in \mathbb{Q}_+} q.1_{L_q}$ is lsm, $\nu$-integrable (with $\ddint g \, d\nu \leqslant 1$), and such that $\langle f \rangle \leqslant \langle g \rangle$. This proves that the subset $\{ \mathbf{f} \in \mathbf{M} : 1 \geqslant \mathbf{v}(\mathbf{f}) \}$ is upper-bounded in $\mathbf{M}$, i.e.\ that $\mathbf{v}$ is non-degenerate. 
\end{proof}


\begin{corollary}\label{coro:usm}
Let $\nu$, $\tau$ be $\sigma$-maxitive measures on $\mathrsfs{E}$. Assume that $\tau$ is 
$\sigma$-principal. 
Then $\nu \absc \tau$ if and only if  $\nu$ has a usm relative density with respect to $\tau$. 
\end{corollary}

\begin{proof}
We first suppose that $\nu$ is finite. 
If $\tau$ is $\sigma$-principal, then $\mathrsfs{E}_+^{\nu}/\tau$ is 
a $\sigma$-principal module by Proposition~\ref{prop:loc1}. By Proposition~\ref{principalcomplet}, $\nu$ is $\tau$-continuous and $\tau$ is localizable. So Theorem~\ref{thm:usm} applies, and $\nu$ admits a usm relative density with respect to $\tau$. 
In the case where $\nu$ is non-finite, we replace $\nu$ by $\nu_1 : B \mapsto \arctan \nu(B)$, which has a usm relative density $c_1$ with respect to $\tau$. Therefore, $\tan c_1$ is a usm relative density of $\nu$ with respect to $\tau$. 
\end{proof}



\begin{example}
Let $E$ be a topological space, $\mathrsfs{E}$ be the collection $\mathrsfs{G}$ of open subsets of $E$, and $\tau = \delta_{\#}$. 
Then $\delta_{\#}$ is localizable and $\nu \absc \delta_{\#}$, for all maxitive measures $\nu$ on $\mathrsfs{G}$. Moreover, $\nu$ is $\delta_{\#}$-continuous if and only if $\nu$ is completely maxitive if and only if $\nu$ has a usc cardinal density. 
Also, $\delta_{\#}$ is $\sigma$-principal if and only if every subset of $E$ is Lindel\"of (then $E$ is usually said to be \textit{hereditarily Lindel\"of}, a property that is implied by second-countability), in which case every $\nu$ on $\mathrsfs{G}$ has a cardinal density. 
\end{example}

To conclude this section we propose a new proof of the Sugeno--Muro\-fushi theorem, which is a Radon--Nikodym like theorem for the Shilkret integral (see \cite[Theorem~I-6.4]{Poncet11}). 


\begin{theorem}[Sugeno--Murofushi]\label{sugeno-murofushi2}
Let $\nu$, $\tau$ be $\sigma$-maxitive measures on a $\sigma$-algebra $\mathrsfs{B}$. 
Assume that $\tau$ is $\sigma$-finite and $\sigma$-principal. 
Then $\nu \absc \tau$ if and only if there exists some $\mathrsfs{B}$-measurable map $c : E \rightarrow \sci$ such that 
$$
\nu(B) = \dint{B} c\, d\tau, 
$$
for all $B\in \mathrsfs{B}$. 
If these conditions are satisfied, then $c$ is unique $\tau$-almost everywhere. 
\end{theorem}

\begin{proof}
If $\nu \absc \tau$, then by Corollary~\ref{coro:usm} 
there are $\mathrsfs{B}$-measurable maps $c_1, c_2 : E \rightarrow \sci$ such that $\nu(B) = \bigoplus^{\tau}_{x\in B} c_1(x)$ and  $\tau(B) = \bigoplus^{\tau}_{x\in B} c_2(x)$, for all $B \in \mathrsfs{B}$. Since $\tau$ is $\sigma$-finite, one can choose a map $c_2$ that takes only finite values (see \cite[Proposition~I-6.1]{Poncet11}). Using the fact that $\tau(\{ c_2 = 0 \}) = 0$, it is easy to show that $\nu(\cdot) = \dint{\cdot} c \, d\tau$ on $\mathrsfs{B}$, where $c$ is the measurable map defined by $c(x) = c_1(x)/c_2(x)$ if $c_2(x) \neq 0$, $c(x) = 0$ otherwise. 
\end{proof}

\section{Completable modules and the normal completion}\label{sec:dmcc}

\subsection{The normal completion of a completable module}

The following theorem defines the concept of \textit{normal completion} of a completable module, alias \textit{Dedekind--MacNeille completion} or \textit{completion by cuts}. See e.g.\ Ern\'e \cite{Erne91c} for the normal completion of quasiordered sets. 

\begin{theorem}\label{thm:nc}
Let $\cis$ be an idempotent semifield. A $\cis$-module is completable if and only if it can be continuously embedded into a complete $\cis$-module. 
\end{theorem}

\begin{proof}[Sketch of the proof]
Sufficiency is obvious. For necessity, let $M$ be a completable $\cis$-module. 
We follow the usual Dedekind--MacNeille completion method for partially ordered sets. If $X \subset M$, we write $X^{\downarrow}$ (resp.\ $X^{\uparrow}$) for the subset of lower (resp.\ upper) bounds of $X$ in $M$, and we write $X^{\uparrow\downarrow}$ instead of $(X^{\uparrow})^{\downarrow}$. A subset $X$ of $M$ is \textit{closed} if $X^{\uparrow\downarrow} = X$, and \textit{proper} if either $X \neq M$ or $M$ has a greatest element. 
Let $\mathrsfs{N}(M)$ be the collection of all proper closed subsets $X$ of $M$. If $X \oplus X' := (X \cup X')^{\uparrow\downarrow}$ for all proper closed subsets $X, X'$, then $X \oplus X'$ is closed, proper (to prove this, note that a closed subset is proper if and only if it is upper-bounded) and $(\mathrsfs{N}(M), \oplus, \{0 \})$ is a commutative idempotent monoid. The partial order induced by $\oplus$ on $\mathrsfs{N}(M)$ is the inclusion, i.e.\ $X \leqslant X' \Leftrightarrow X \subset X'$. 
For the external multiplication we let $X.t := \{ x.t : x \in X \}$ if $t \neq 0$ and $X.0 = \{ 0 \}$, and one can check that $X.t$ is proper closed for all proper closed subsets $X$. 
Also, since $M$ is completable, the following relations hold:  
\begin{align*}
(X \oplus X').t &= X.t \oplus X'.t, \\ 
X.\bigoplus T &= \bigoplus_{t \in T} X.t, 
\end{align*}
for all $X, X' \in \mathrsfs{N}(M)$, $t \in \cis$ and $T \subset \cis$ with supremum, and 
$$
X.\bigwedge F = \bigwedge_{f \in F} X.f, 
$$
for all $X \in \mathrsfs{N}(M)$ and $\F$-sets $F$ in $\cis$ with infimum.
Thus, $\mathrsfs{N}(M)$ is a completable $\cis$-module, which is actually complete for  
$$
\bigoplus_{j\in J} X_j = (\bigcup_{j\in J} X_j)^{\uparrow\downarrow}, 
$$ 
for all upper-bounded families $(X_j)_{j \in J}$ of proper closed subsets. Note that the infimum in $\mathrsfs{N}(M)$ satisfies  
$$
\bigwedge_{j\in J} X_j = \bigcap_{j\in J} X_j, 
$$ 
for all families $(X_j)_{j \in J}$ of proper closed subsets. 

To embed $M$ into $\mathrsfs{N}(M)$, let $i_M : M  \rightarrow \mathrsfs{N}(M), x \mapsto \downarrow\!\! x$. This map $i_M$ is well defined, for $\downarrow\!\! x$ is proper closed for all $x \in M$. Clearly, we have $i_M(x.t) = i_M(x).t$ and $i_M(x \oplus y) = i_M(x) \oplus i_M(y)$ for all $x, y \in M$ and $t \in \cis$, so that $i_M$ is an injective morphism. Moreover, for all subsets $X$ of $M$ with supremum (resp.\ with infimum), we have $i_M(\bigoplus X) = \bigoplus i_M(X)$ (resp.\ $i_M(\bigwedge X) = \bigwedge i_M(X)$), so that $i_M$ is continuous. 
\end{proof}

\begin{remark}
\textcolor{mygreen}{If $\F = \PFilters$, then a module is completable if and only if it is \textit{b-regular} in the sense of Litvinov et al.\ \cite[Definition~3.9]{Litvinov01}.}
\end{remark}

\begin{remark}\label{rk:ub}
Identifying $M$ and $i_M(M)$, every element of $\mathrsfs{N}(M)$ can be expressed as a supremum (resp.\ an infimum) of elements of $M$. In particular, every element of $\mathrsfs{N}(M)$ is upper-bounded by some element of $M$. 
\end{remark}

\begin{remark}
Every idempotent semifield considered as a module over itself is completable. However, an idempotent semifield can be embedded into a complete idempotent semifield if and only if it is commutative (see Remark~\ref{rk:iwa}). 
\end{remark}

\subsection{Cut-stability and extensions}

In this paragraph we give two categorical results on the normal completion. Since every $\{0,1\}$-module is completable, they extend that of Ern\'e \cite{Erne91c}. 

Ern\'e introduced the concept of cut-stability, that we modify as follows. A map $f : M \rightarrow N$ is \textit{lower cut-stable} if  
$$
f(X^{\uparrow})^{\downarrow} = f(X)^{\uparrow\downarrow},  
$$
for all 
subsets $X$ of $M$, and \textit{cut-stable} if it is lower cut-stable and such that 
$$
f(F^{\downarrow})^{\uparrow} = f(F)^{\downarrow\uparrow}, 
$$
for all $\F$-sets $F$ of $M$. 
For instance, the map $i_M$ that embeds a completable module into its normal completion (see the proof of Theorem~\ref{thm:nc}) is cut-stable. Note that every cut-stable morphism is continuous. 


\begin{proposition}[Compare with \protect{\cite[Theorem~3.1]{Erne91c}}]
Let $\cis$ be an idempotent semifield. 
A morphism $f$ between completable $\cis$-modules $M$ and $N$ is lower cut-stable if and only if there exists a (unique) morphism $\mathrsfs{N}(f)$ between the normal completions $\mathrsfs{N}(M)$ and $\mathrsfs{N}(N)$ that preserves arbitrary existing suprema and extending $f$, i.e.\ such that the following diagram commutes: 
\begin{diagram}
M            & \rTo^{f}       & N            \\
\dTo<{i_M}   &                & \dTo>{i_N}   \\
\mathrsfs{N}(M) & \rTo^{\mathrsfs{N}(f)} & \mathrsfs{N}(N) \\
\end{diagram}
Moreover, in the cases $\F = \UpperStar$ and $\F = \PFilters$, the morphism $f$ is cut-stable if and only if $\mathrsfs{N}(f)$ is continuous, and the normal completion extends to a functor $\mathrsfs{N}$ on the category of completable $\cis$-modules with continuous morphisms. 
\end{proposition}

\begin{proof}
Analogous to that of \cite[Theorem~3.1]{Erne91c}. 
\end{proof}

\begin{remark}
\textcolor{myblue}{In the case $\F = \Filters$, it is only possible to say that $\mathrsfs{N}(f)$ preserves infima of filters of $\mathrsfs{N}(M)$ of the form $\mathrsfs{F}_F = \{ X \in \mathrsfs{N}(M) : X \cap F \neq \emptyset\}$, with $F$ a filter of $M$. This is not enough to make $\mathrsfs{N}(f)$ smooth (i.e.\ continuous) in general.}
\end{remark}

The following universal property of the normal completion is deduced immediately. 

\begin{corollary}[Compare with \protect{\cite[Corollary~3.2]{Erne91c}}]\label{coro:univ}
Let $\cis$ be an idempotent semifield. 
A morphism $f$ from a completable $\cis$-module $M$ into a complete $\cis$-module $N$ is 
lower cut-stable if and only if there exists a (unique) morphism from $\mathrsfs{N}(M)$ into $N$ arbitrary existing suprema and extending $f$, i.e.\ such that the following diagram commutes: 
\begin{diagram}
M            & \rTo_{f}            & N  \\
\dTo<{i_M}     & \ruDashto_{\mathrsfs{N}(f)} &    \\
\mathrsfs{N}(M) &                     &    \\
\end{diagram}
Moreover, in the cases $\F = \UpperStar$ and $\F = \PFilters$, the morphism $f$ is cut-stable if and only if $\mathrsfs{N}(f)$ is continuous. 
\end{corollary}




We call a pair $\overline{M}/M$ an \textit{extension} over $\cis$ (we shall also speak of the extension $\overline{M}$ of $M$ over $\cis$) if $\overline{M}$ is a complete module over $\cis$ and $M$ is a submodule of $\overline{M}$. 
In this situation, $M$ is necessarily completable. 
The extension is \textit{short} if, for all $y \in \overline{M}$, there is some $x \in M$ such that $y \leqslant x$. 
This condition restricts the ``size'' of $\overline{M}$ and will reveal its importance in the next section. 
Also, the extension is \textit{cut-stable} if the map $i : M \ni x \mapsto x \in \overline{M}$ is cut-stable; in this case, if a subset (resp.\ an $\F$-set) of $M$ has a supremum (resp.\ an infimum) in $M$, then it coincides with its supremum (resp.\ its infimum) in $\overline{M}$. 
For a completable module $M$, the normal completion leads to a short and cut-stable extension $\mathrsfs{N}(M)/i_M(M)$ (see Remark~\ref{rk:ub}). 

\begin{example}\label{ex:ccplus}
Let $E$ be a Hausdorff topological space, let $C_c^+$ be the set of compactly-supported continuous maps from $E$ to $\scif$, and let $L^+$ be the set of lower-semi\-continuous maps from $E$ to $\sci$. Then $L^+/C_c^+$ is an extension over $\scit$ that is neither short nor cut-stable in general. Figure~\ref{fig:Schema1Chapitre3} gives a sequence of continuous functions on $E = [0,1]$, whose supremum is $x \mapsto 1$ in $C_c^+$, and is $x \mapsto 1_{(0,1)}(x)$ in $L^+$. 
\begin{figure}
	\centering
		\includegraphics[width=0.90\textwidth]{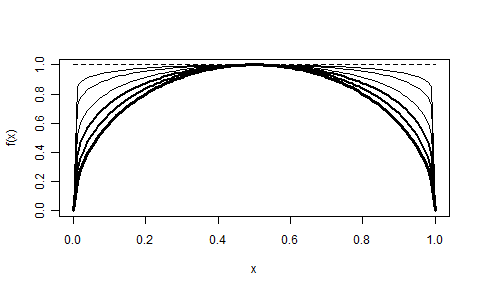}
	\caption{A nondecreasing sequence of continuous functions on $[0, 1]$. }
	\label{fig:Schema1Chapitre3}
\end{figure}
\end{example}

\section{Residuated forms on a module extension}\label{sec:rmf}

This section is expressed in the language $\PFilters$ of principal filters, 
and $\overline{M}/M$ is an extension over an 
 idempotent semifield $\cis$. Henceforth, all suprema of subsets of $M$ or $\overline{M}$ are taken in $\overline{M}$. 
A map $v : M \rightarrow \cis$ is \textit{residuated on $\overline{M}/M$} if there exists a map $w : \cis \rightarrow \overline{M}$ 
satisfying 
\begin{equation}\label{eq:res1}
x \leqslant w(t) \Longleftrightarrow t \geqslant v(x), 
\end{equation}
for all $x\in M$, $t \in \cis$. 
In this case, there exists a least map $w$ such that Equivalence~(\ref{eq:res1}) holds, called the \textit{adjoint} of $v$ with respect to $\overline{M}/M$, denoted by $v^{\#}$, and defined by 
$$
v^{\#}(t) = \bigoplus \{ x \in M : t \geqslant v(x) \}, 
$$
for all $t \in \cis$, where the supremum is taken in $\overline{M}$. 

\begin{lemma}
A map that is residuated on a short extension of $M$ is residuated on each extension of $M$. 
\end{lemma}

\begin{proof}
Let $\overline{M}/M$ be a short extension. 
Consider the following commutative diagram: 
\begin{diagram}
M            & \rTo_{i}            & \overline{M}  \\
\dTo<{i_M}     & \ruDashto_{\overline{i}} &    \\
\mathrsfs{N}(M) &                     &    \\
\end{diagram}
where $\overline{i}$ is defined by $\overline{i}(X) = \bigoplus X$. 
Let $v : M \rightarrow \cis$ be a residuated map on $\overline{M}/M$. 
Then $v$ admits an adjoint $v^{\#} : \cis \rightarrow \overline{M}$. 
We first show that $v$ is residuated on $\mathrsfs{N}(M)/M$. 
If $t \in \cis$, the subset $I_t = \{ x \in M : t \geqslant v(x) \}$ is upper-bounded (by $v^{\#}(t)$) in $\overline{M}$, hence also in $M$ since $\overline{M}/M$ is short. 
Thus, $I_t$ admits a supremum in $\mathrsfs{N}(M)$, that we denote by $w(t)$. Since $\overline{i}$ preserves arbitrary existing suprema, $\overline{i}(w(t)) = \bigoplus \{ x \in M : t \geqslant v(x) \}$, where the supremum is taken in $\overline{M}$. Thus, $\overline{i} \circ w = v^{\#}$. 
We show that Equivalence~(\ref{eq:res1}) holds. Clearly, $t \geqslant v(x)$ implies $x \leqslant w(t)$. Conversely, assume that $x \leqslant w(t)$. Composing by $\overline{i}$, we get $x \leqslant v^{\#}(t)$, so that $t \geqslant v(x)$. This proves that $v$ is residuated on $\mathrsfs{N}(M)/M$. 

Now let $\tilde{M}/M$ be some extension of $M$, and consider the related commutative diagram: 
\begin{diagram}
M            & \rTo_{j}            & \tilde{M}  \\
\dTo<{i_M}     & \ruDashto_{\tilde{j}} &    \\
\mathrsfs{N}(M) &                     &    \\
\end{diagram}
where $j : M \ni x \mapsto x \in \tilde{M}$ and $\tilde{j} : X \mapsto \bigoplus X$. 
Then one can show that 
$$
x \leqslant \tilde{j} \circ w(t) \Leftrightarrow t \geqslant v(x), 
$$ 
for all $x \in M, t \in \cis$, so $v$ is residuated on $\tilde{M}/M$.  
\end{proof}

We define a \textit{residuated form} on $\overline{M}/M$ as a homogeneous residuated map on $\overline{M}/M$. 
A map $v : M \rightarrow \cis$ is \textit{non-degenerate} on $\overline{M}/M$ if $\{ x \in M  : 1 \geqslant v(x) \}$ has an upper-bound in $\overline{M}$. 





\begin{lemma}
Let $\overline{M}/M$ be a short extension over a cis $\cis$. 
We suppose that every element of $\overline{M}$ can be expressed as the supremum in $\overline{M}$ of elements of $M$. 
Then a map $v : M \rightarrow \cis$ is a residuated form on $\overline{M}/M$ if and only if $v$ is non-degenerate on $\overline{M}/M$ and extends to a residuated form on $\overline{M}$. 
\begin{diagram}
M            & \rTo_{v}            & \cis  \\
\dTo<{i}   & \ruDashto_{\bar{v}} &       \\
\overline{M} &                     &       \\
\end{diagram}
\end{lemma}

\begin{proof}
Let $v$ be a residuated form on $\overline{M}/M$ with adjoint $v^{\#} : \cis \rightarrow \overline{M}$. Then $\{ x \in M : 1 \geqslant v(x) \}$ is upper-bounded (by $v^{\#}(1)$) in $\overline{M}$, so $v$ is non-degenerate on $\overline{M}/M$. Moreover, if $y \in \overline{M}$, then $y$ is upper-bounded by some $x \in M$ since $\overline{M}/M$ is short, so the subset $\{ t \in \cis : y \leqslant v^{\#}(t) \}$ of $\cis$, which contains $t = v(x)$, is nonempty. 
Thus, we can define the map $\overline{v} : \overline{M} \rightarrow \cis$ by $\overline{v}(y) = \bigwedge \{ t \in \cis : y \leqslant v^{\#}(t) \}$. Since  every element of $\overline{M}$ can be expressed as the supremum in $\overline{M}$ of elements of $M$, we have 
$$
y \leqslant v^{\#}(t) \Leftrightarrow t \geqslant \overline{v}(y), 
$$
for all $y \in \overline{M}, t \in \cis$. So we obtain that 
$\overline{v}$ is a residuated form on $\overline{M}$. 

Conversely, assume that a map $v : M \rightarrow \cis$ is non-degenerate on $\overline{M}/M$ and extends to a residuated form $\overline{v}$ on $\overline{M}$. 
If $w$ is the adjoint of $\overline{v}$ (with respect to $\overline{M}/\overline{M}$), then 
$$
x \leqslant w(t) \Leftrightarrow t \geqslant v(x), 
$$
for all $x \in M, t \in \cis$, 
so $v$ is a residuated form on $\overline{M}/M$. 
\end{proof}


With respect to the filter selection $\PFilters$, we say that an element $c \in \overline{M}$ is \textit{archimedean in $\overline{M}/M$} if
\begin{itemize}
	\item the subset $F_c(x)$ is nonempty for all $x \in M$, 
	\item if the infimum of $F_c(x)$ is zero, then $F_c(x) \supset \cis\setminus\{0\}$,   
\end{itemize}
where $F_c(x)$ denotes the subset $\{ t \in \cis : c.t \geqslant x \}$. 
The next lemma can be proved along the same lines as Lemma~\ref{lem:arch1}. 




\begin{lemma}
Let $\overline{M}/M$ be an extension over a cis $\cis$, 
and let $v : M \rightarrow \cis$ be a residuated form on $\overline{M}/M$. Then the supremum of $\{ x \in M : 1 \geqslant v(x) \}$ is archimedean in $\overline{M}/M$. 
\end{lemma}

The innovation of this section mainly relies on highlighting the role of the following concept in the representation of continuous linear forms. An extension $\overline{M}/M$ is \textit{meet-continuous} if 
$$
x \wedge \bigoplus I = \bigoplus \downarrow \!\!x \cap I, 
$$
for all $x\in M$ and all ideals $I$ of $M$ with an upper-bound in $\overline{M}$. 

\begin{example}[Example~\ref{ex:ccplus} continued]\label{ex:ccplus2}
Let $E$ be a topological space. We still denote by $L^+$ the set of lower-semi\-continuous maps from $E$ to $\sci$. If $M$ is some submodule of the set of continuous maps from $E$ to $\scif$, then the extension $L^+/M$ is meet-continuous. 
Before proving this assertion, note that the supremum in $L^+$ coincides with the pointwise supremum. Now let $f \in M$ and let $I$ be an ideal in $M$. We want to show that $f \wedge \bigoplus I \leqslant \bigoplus \downarrow \!\!f \cap I$, i.e.\ that $f(x) \wedge \bigoplus_{g \in I} g(x) \leqslant \bigoplus_{h \in I, h \leqslant f} h(x)$. For this purpose, let $s < f(x) \wedge \bigoplus_{g \in I} g(x)$. There is some $g \in I$ such that $s < f(x) \wedge g(x)$. Then the map $h = f \wedge g$ is continuous, is in $I$ and satisfies $h \leqslant f$ and $s < h(x)$, so the claim follows. 
\end{example}

A map $f : M \rightarrow \cis$ is \textit{continuous} on $\overline{M}/M$ if, for every subset $X \subset M$ such that $\bigoplus X \in M$, $f(X)$ has a supremum in $\cis$ satisfying $f(\bigoplus X) = \bigoplus f(X)$. 
If $x \in M$ and $c \in \overline{M}$, we write $\langle c, x \rangle$ 
for the infimum of $\{ t \in \cis : c.t \geqslant x \}$ whenever this set is nonempty, and $\langle c, x \rangle = \top$ otherwise. 
The following theorem shows that, under conditions different than those of Theorem~\ref{thm:litv}, the representation $v(\cdot) = \langle c, \cdot \rangle$ still holds, at the price that $c$ no longer needs to belong to $M$. 
But it is actually important to authorize $c$ to be outside $M$, in order to encompass the (idempotent) Riesz representation theorem (see Theorem~\ref{thm:riesz1} below). 

\begin{theorem}\label{supergene}
Suppose that $\overline{M}/M$ is a meet-continuous extension over a cis $\cis$, and let $v : M \rightarrow \cis$ be a linear form on $M$. Then the following conditions are equivalent: 
\begin{enumerate}
	\item\label{supergene0} $v$ is residuated on $\overline{M}/M$, 
	\item\label{supergene1} $v$ is non-degenerate continuous on $\overline{M}/M$, 
	\item\label{supergene2} $v(\cdot) = \langle c, \cdot \rangle$, for some archimedean element $c$ in $\overline{M}/M$. 
\end{enumerate}
If any of these conditions is satisfied, then the supremum of $\{ 1 \geqslant v \}$ in $\overline{M}$ is the least $c$ satisfying $v(\cdot) = \langle c, \cdot \rangle$. 
\end{theorem}

\begin{proof}
Assume that $v$ is non-degenerate and continuous. Let $t \in \cis$ and $I_t = \{ x \in M : t \geqslant v(x) \}$. Clearly $I_t$ is an ideal, let $w(t)$ denote its supremum in $\overline{M}$. 
To prove that $v$ is residuated on $\overline{M}/M$, we have to show that, if $x \in M$ and $x \leqslant w(t)$, then $t \geqslant v(x)$, i.e.\ $x \in I_t$. 
If $\overline{M}/M$ is meet-continuous, then $x = x \wedge w(t) = x \wedge \bigoplus I_t = \bigoplus \downarrow \!\!x \cap I_t \in M$, and using the fact that $v$ is continuous we get $v(x) = \bigoplus_{y \in \downarrow x \cap I_t} v(y) \leqslant t$, i.e.\ $x \in I_t$. This proves that (\ref{supergene1}) implies (\ref{supergene0}), and the converse implication is clear. 
The equivalence between (\ref{supergene0}) and (\ref{supergene2}) can be proved along the same lines as in the proof of Theorem~\ref{thm:litv}. 
\end{proof}

\begin{remark}\label{rk:unique}
If every element of $\overline{M}$ can be expressed as the supremum in $\overline{M}$ of elements of $M$, there is a \textit{unique} such element $c$ in the previous theorem. Indeed, assume that, for some archimedean elements $b, c$ in $\overline{M}/M$, we have $\langle b, x\rangle = \langle c, x \rangle$ for all $x \in M$. Then $x \leqslant b \Leftrightarrow 1 \geqslant \langle b, x \rangle \Leftrightarrow 1 \geqslant \langle c, x \rangle \Leftrightarrow x \leqslant c$, for all $x \in M$, so that $\downarrow\!\! b \cap M = \downarrow\!\! c \cap M$. This implies that $b = \bigoplus \downarrow\!\! b \cap M = \bigoplus \downarrow\!\! c \cap M = c$. 
\end{remark}

As a direct application, we shall reprove an idempotent version of the Riesz representation theorem in Section~\ref{sec:riesz}. 


\section{The Riesz representation theorem}\label{sec:riesz}




In this section, we aim at proving Riesz representation theorems for the Shilkret integral with the help of Theorem~\ref{supergene}. 
The filter selection implicitly used here is $\PFilters$, i.e.\ the one that selects principal filters. 
	
Let $E$ be a Hausdorff topological space, $\mathrsfs{G}$ (resp.\ $\mathrsfs{B}$) be the collection of open subsets (resp.\ Borel subsets) of $E$, and $C_c^+$ be the set of nonnegative compactly-supported continuous maps from $E$ to $\scif$. 
The next theorem is of historical importance, for (part of) it was originally stated by Choquet \cite[Paragraph~53.1]{Choquet54} without proof; the first proof is due to Kolokoltsov and Maslov \cite[Theorem~1]{Kolokoltsov87}. 
The reader can also refer to Puhalskii \cite[Theorem~1.7.21]{Puhalskii01}. 

\begin{lemma}[Urysohn]
Let $E$ be a locally-compact Hausdorff space. 
If $K \subset U \subset E$ with $K$ compact and $U$ open, then there exists a compactly-supported continuous map $f : E \rightarrow [0, 1]$ such that $f(x) = 1$ for all $x \in K$ and $\overline{\{ x \in E : f(x) > 0 \}} \subset U$. 
\end{lemma}

\begin{proof}
This is customarily proved by using the fact that the one-point compactification of $E$ is a normal space, see e.g.\ Aliprantis and Border \cite[Corollary~2.74]{Aliprantis06}. 
\end{proof}

\begin{lemma}\label{lem:lscsup}
Let $E$ be a locally-compact Hausdorff space. 
Every lower-semi\-continuous map $g : E \rightarrow \sci$ is a supremum of elements of $C_c^+$. 
\end{lemma}

\begin{proof}
Let $s \in \scif$ and $x \in E$ be such that $s < g(x)$. Since $\{ g > s \}$ is an open subset, there is some $f_1 \in C_c^+$ such that $s < f_1(x) < g(x)$ and $f_1 = 0$ on $\{ g \leqslant s \}$, by Urysohn's lemma. Now $\{ g > f_1 \}$ is also open, so there is some $f_2 \in C_c^+$ such that $f_2(x) = f_1(x)$ and $f_2 = 0$ on $\{ g \leqslant f_1 \}$. This proves that the map $f_{s, x} = f_1 \wedge f_2$ is in $C_c^+$ and satisfies $f_{s, x} \leqslant g$ and $s < f_{s, x}(x)$. As a consequence, one can see that $g$ is the pointwise supremum of all such maps $f_{s, x}$. 
\end{proof}





\begin{theorem}[Improves \protect{\cite[Theorem~1.7.21]{Puhalskii01}}]\label{thm:riesz1}
Let $E$ be a locally-com\-pact Hausdorff space, and let $V : C_c^+ \rightarrow \scif$ be a linear form on $C_c^+$. 
Then there exists a unique regular maxitive measure $\nu$ on $\mathrsfs{B}$ 
such that 
$$
V(f) = \ddint f\,d\nu,
$$
for all $f \in C_c^+$. 
Moreover, $\nu$ takes finite values on compact subsets of $E$. 
\end{theorem}


\begin{proof}
The functional $V$ is a linear form on the $\scit$-module $C_c^+$. If $\overline{M} = L^+$ is the module of 
$\sci$-valued lower-semi\-continuous maps and $M = C_c^+$, then $\overline{M}/M$ is a meet-continuous extension by Example~\ref{ex:ccplus2}, so 
Theorem~\ref{supergene} applies 
if we show that $V$ is continuous and non-degenerate on $\overline{M}/M$. 
Non-degeneracy of $V$ on $\overline{M}/M$ is ensured by the existence of arbitrary suprema in $\overline{M}$. 
For the continuity of $V$, let $(f_j)_{j \in J}$ be a nondecreasing net of elements of $C_c^+$ such that $f := \bigoplus_{j \in J} f_j \in M$, where the supremum is taken in $\overline{M}$. In particular, $(f_j)_{j \in J}$ converges pointwise to $f$, see Example~\ref{ex:ccplus2}. 
We want to prove that 
$
V(f) = \bigoplus_{j\in J} V(f_j).
$
So let $1> \varepsilon > 0$, let $K_{\varepsilon}$ be the compact set $\{ f \geqslant \varepsilon \}$, and define $h_j$ on $K_{\varepsilon}$ by $h_j(x) = f_j(x)/f(x)$. Then $h_j \in C_c^+(K_{\varepsilon})$ and $(h_j)_{j \in J}$ is a nondecreasing net converging to $1$ pointwise.  Applying Dini's Theorem, the convergence is uniform on $K_{\varepsilon}$, hence there is some $j_0 \in J$ such that $1 \leqslant \varepsilon + h_{j_0}$ on $K_{\varepsilon}$. Thus, $f \leqslant \varepsilon.f + f_{j_0}$ on $K_{\varepsilon}$. 
Let $K$ be a compact set containing $\{ 0 < f < 1 \}$. By Urysohn's lemma, we may find a compactly-supported continuous map $h$ such that $h = 1$ on $K$. 
Then $f \leqslant (\frac{1}{1-\varepsilon} f_{j_0}) \oplus (\varepsilon h)$ on $E$. This implies $V(f) \leqslant (\frac{1}{1-\varepsilon} \bigoplus_{j\in J} V(f_j)) \oplus (\varepsilon . V(h))$, for all $1> \varepsilon > 0$, so $V$ is continuous. By Theorem~\ref{supergene}, there exists some archimedean element $c$ in $\overline{M}/M$ such that $V(f) = \langle c, f \rangle$, 
for all $f \in M$. Defining the usc map $c^{+}$ by $c^{+}(x) = 1/c(x)$, we have $V(f) = \bigoplus_{x\in E} f(x) . c^{+}(x)$, for all $f \in M$. 
The maxitive measure $\nu$ defined on $\mathrsfs{B}$ by $\nu(B) = \bigoplus_{x \in B} c^{+}(x)$ for all $B \in \mathrsfs{B}$ is regular by \cite[Theorem~3.22]{Poncet12b}, and we have 
$$
V(f) = \ddint f\,d\nu, 
$$
for all $f \in M$. 
If $K$ is a compact subset of $E$, Urysohn's lemma 
 provides a map $f \in M$ such that $f(x) = 1$ for all $x \in K$, so that $c^{+}(x) \leqslant V(f)$ for all $x \in K$. This ensures that $\nu(K) = \bigoplus_{x \in K} c^{+}(x)$ is finite. 

Uniqueness of $\nu$ is a direct consequence of the uniqueness of $c$, which itself derives from Lemma~\ref{lem:lscsup} and Remark~\ref{rk:unique}. 
\end{proof}







In the same line, one can formulate Riesz like theorems for a functional $V$ defined on the set $C_b^+$ of nonnegative bounded continuous maps instead of $C_c^+$. 
Breyer and Gulinsky \cite{Breyer96} proved the next theorem\footnote{We were not in a position to access this article. }, see also Puhalskii \cite[Theorem~1.7.25]{Puhalskii01} and Gulinsky \cite[Theorem~3.4]{Gulinsky03}. 
A functional $V : C_b^+ \rightarrow \scif$ is \textit{tight} if, for all $\varepsilon > 0$, there is some compact subset $K$ of $E$ such that $V(f) \leqslant \varepsilon \left\|f\right\|$, for each $f \in C_b^+$ that equals $0$ on $K$. 

\begin{theorem}\cite{Breyer96}\label{thm:riesz3}
Assume that $E$ is a Tychonoff space, and let $V : C_b^+ \rightarrow \scif$ be a tight linear form on $C_b^+$ that preserves countable pointwise suprema. 
Then there exists a unique (finite) tight regular maxitive measure $\nu$ on $\mathrsfs{B}$ such that 
$$
V(f) = \ddint f\,d\nu,
$$
for all $f \in C_b^+$. 
\end{theorem}

\begin{proof}
See e.g.\ Puhalskii \cite[Theorem~1.7.25]{Puhalskii01}. The idea of the proof is to use the Stone--\v{C}ech compactification of $E$ and to apply Theorem~\ref{thm:riesz1}. 
\end{proof}


In order to treat the case of a non-tight linear form on $C_b^+$, we shall assume that the Tychnoff space is also second-countable, i.e.\ is a separable metrizable space. 
A functional $V : C_b^+ \rightarrow \scif$ is \textit{optimal} if, for all nonincreasing sequences $(f_n)_{n \in \mathbb{N}}$ of elements of $C_b^+$ tending pointwise to $0$, the sequence $(V(f_n))_{n \in \mathbb{N}}$ tends to $0$.  


\begin{theorem}\label{thm:riesz4}
Assume that $E$ is a separable metrizable space, and let $V : C_b^+ \rightarrow \scif$ be a linear form on $C_b^+$ that preserves countable pointwise suprema. 
Then there exists a unique (finite) regular maxitive measure $\nu$ on $\mathrsfs{B}$ such that 
$$
V(f) = \ddint f\,d\nu,
$$
for all $f \in C_b^+$. 
Moreover, if $E$ is Polish, 
then the following conditions are equivalent: 
\begin{itemize}
	\item $V$ is optimal, 
	\item $V$ is tight, 
	\item $\nu$ is tight, 
	\item $c^{+}$ is upper-compact, 
\end{itemize}
where $c^{+}$ is the maximal cardinal density of $\nu$. 
\end{theorem}

\begin{proof}
We denote $L^+$ by $\overline{M}$ and $C_b^+$ by $M$. 
As in the proof of Theorem~\ref{thm:riesz1}, $\overline{M}/M$ is a meet-continuous extension by Example~\ref{ex:ccplus2}, $V$ is a non-degenerate linear form on $\overline{M}/M$, and we want to prove that 
\begin{equation}\label{eq:supcom}
V(f) = \bigoplus_{j\in J} V(f_j), 
	\end{equation}
for all nondecreasing nets $(f_j)_{j \in J}$ in $M$ such that $f := \bigoplus_{j \in J} f_j \in M$, where the supremum is taken in $\overline{M}$. 
Let $q$ be a nonnegative rational number. The open subset $\{ f > q \}$ is covered by the family of open subsets $\{ f_j > q \}$, $j \in J$. Since $E$ is separable metrizable, it is second-countable, so we can extract a countable subcover and write $\{ f > q \} = \bigcup_{j \in N_q} \{f_j > q \}$, where $N_q$ is a countable subset of $J$. Defining $N$ as the union of all $N_q$, which is countable, we see that $f = \bigoplus_{j \in N} f_j$. Since $V$ preserves countable pointwise suprema in $M$, Equation~(\ref{eq:supcom}) holds, so $V$ is continuous on $\overline{M}/M$. The existence of $\nu$ now follows from the same argument as in the proof of Theorem~\ref{thm:riesz1}. 
Since $E$ is separable metrizable, $E$ is normal; using Urysohn's lemma for normal spaces (see e.g.\ \cite[Theorem~2.46]{Aliprantis06}), we can show that every $\sci$-valued lower-semi\-continuous map is a supremum of elements of $C_b^+$, as a perfect analogue of Lemma~\ref{lem:lscsup}, and this leads to the uniqueness of $\nu$. 

Now suppose that $E$ is Polish. 
Assume that $\{ c^{+} \geqslant t \}$ is not compact, for some $t>0$. Then there exists some $\varepsilon > 0$ and some sequence $(x_n)$ of elements of  $\{ c^{+} \geqslant t \}$ such that $d(x_m,x_n) > \varepsilon$, for all $m \neq n$. Since $E$ is Polish, one can find some countable family of open balls with radius $\varepsilon /2$ covering $E$. Let $B_k \ni x_k$ be one of these balls containing $x_k$. Let $f_k \in C_b^+$ such that $f_k(x_k) = 1/t$ and $f_k = 0$ on $E \setminus B_k$. Then $g_n := \bigoplus_{k \geqslant n} f_k$ tends pointwise to $0$. But $V(g_n) \geqslant g_n(x_n) . c^{+}(x_n) \geqslant 1$, so $V$ is not optimal. 

Conversely, assume that $c^{+}$ is upper-compact and that $V$ is not optimal. So let $(f_n)$ be a nonincreasing sequence of elements of $C_b^+$ that tends pointwise to zero, and assume that $\bigwedge_{n \in \mathbb{N}} V(f_n) > 0$. Then there exists some $t>0$ such that $V(f_n) > t$, for all $n$. We deduce the existence of some $x_n \in E$ such that $f_n(x_n) . c^{+}(x_n) > t$. 
Since the sequence $(f_n)$ is nonincreasing, $f_n \leqslant f_0$. Also, $f_0$ is upper-bounded by some $u > 0$, so that $x_n$ is in the compact subset $\{ c^{+} \geqslant t/u \}$. This implies that $(x_n)$ clusters to some $x$, 
but this contradicts the fact that $f_m(x_n) \geqslant f_m(x_n) . c^{+}(x_n) \geqslant f_n(x_n) . c^{+}(x_n) > t$ for all $n \geqslant m$. 

By \cite[Theorem~3.25]{Poncet12b}, we know that $\nu$ is tight if and only if $c^{+}$ is upper-compact. If $V$ is tight, then $\nu$ is tight by uniqueness of $\nu$ in Theorem~\ref{thm:riesz3}; the converse statement is obvious. 
\end{proof}


Bell and Bryc also investigated the case where $E$ is Polish, their result \cite[Theorem~2.1]{Bell01} is encompassed in the previous theorem. 

Akian proved a slightly different result for normal spaces 
that we merely recall for the sake of completeness. 

\begin{theorem}\cite[Theorem~4.8]{Akian99}\label{thm:riesz2}
Assume that $E$ is a normal space, and let $V : C_b^+ \rightarrow \scif$ be a linear form on $C_b^+$ that preserves countable pointwise suprema. 
Then there exists a unique $\sigma$-maxitive measure $\nu$ on $\mathrsfs{B}$ 
such that 
$$
V(f) = \ddint f\,d\nu,
$$
for all $f \in C_b^+$. 
\end{theorem}

\begin{remark}[On large deviations]
The idempotent Riesz representation theorem partly originates from large deviation questionings. 
Varadhan \cite{Varadhan66} was interested in the functional defined on the set $C_b^+(E)$ of nonnegative bounded continuous maps, for some Polish space $E$,  by 
$$
V(f) = \lim_{n\rightarrow \infty} \left(\int_E f^{1/\alpha_n} \, d\mu_n \right)^{\alpha_n}, 
$$
whenever the limit exists, 
where $(\mu_n)$ is a sequence of probability measures on $E$ satisfying a large deviation principle, and $\alpha_n \downarrow 0$ when $n \uparrow \infty$. He proved the representation 
\begin{equation}\label{varadhan}
V(f) = \bigoplus_{x \in E} (f(x) e^{- I(x)}),
\end{equation}
where $I : E \rightarrow \sci$ is the (lower-semi\-continuous) \textit{rate function}, that governs large deviations. 
For more on the links between large deviation principles and maxitive measures, we refer the reader to Puhalskii \cite{Puhalskii94, Puhalskii01}, B.\ Gerritse \cite{Gerritse96}, Akian \cite{Akian99}, Nedovi\'c et al.\ \cite{Nedovic05}. 
\end{remark}


\section{Conclusion and perspectives}\label{sec:conclusion}

Following Cohen et al.\ \cite{Cohen04}, we could certainly have pushed on the generalization to the use of \textit{reflexive} idempotent semirings\footnote{Or even to the use of idempotent semirings in which every element is the supremum of reflexive elements, so as to generalize both Cohen et al.'s and Litvinov et al.'s approaches. } instead of idempotent semifields, but our main interest here, at least in the first part of the paper, was to stress the role of $\Z$ theory in the gathering of similar but a priori distinct results from different mathematical areas. 

Some results on modules and continuous linear forms are of topological flavour; this aspect will be sharpened in a future work with the examination of topological $\scit$-modules. 





\subsection*{Acknowledgments}

I would like to thank Marc Leandri for his advice on a preliminary version of the manuscript. 
I am also indebted in Pr.\ Jimmie D.\ Lawson who made numerous comments and pointed out a mistake in an ealier version of Section~\ref{sec:dmcc}. 

\bibliographystyle{plain}

\def\cprime{$'$} \def\cprime{$'$} \def\cprime{$'$} \def\cprime{$'$}
  \def\ocirc#1{\ifmmode\setbox0=\hbox{$#1$}\dimen0=\ht0 \advance\dimen0
  by1pt\rlap{\hbox to\wd0{\hss\raise\dimen0
  \hbox{\hskip.2em$\scriptscriptstyle\circ$}\hss}}#1\else {\accent"17 #1}\fi}
  \def\ocirc#1{\ifmmode\setbox0=\hbox{$#1$}\dimen0=\ht0 \advance\dimen0
  by1pt\rlap{\hbox to\wd0{\hss\raise\dimen0
  \hbox{\hskip.2em$\scriptscriptstyle\circ$}\hss}}#1\else {\accent"17 #1}\fi}

\end{document}